\newtheorem{precor}{{\bf Corollary}}
\newenvironment{cor}{\begin{precor}{\hspace{-0.5
               em}{\bf.\ }}}{\end{precor}}
\newtheorem{precon}{{\bf Conjecture}}
\newtheorem{prealphcon}{{\bf Conjecture}}
\newenvironment{alphcon}{\begin{prealphcon}{\hspace{-0.5
               em}{\bf.\ }}}{\end{prealphcon}}
\newtheorem{predefin}{{\bf Definition}}
\newtheorem{preexm}{{\bf Example}}
\newtheorem{preappl}{{\bf Application}}
\newtheorem{prelem}{{\bf Lemma}}
\newenvironment{lem}{\begin{prelem}{\hspace{-0.5
               em}{\bf.\ }}}{\end{prelem}}
\newtheorem{preproof}{{\bf Proof.\ }}
\newenvironment{proof}[1]{\begin{preproof}{\rm
               #1}\hfill{$\blacksquare$}}{\end{preproof}}
\newtheorem{prethm}{{\bf Theorem}}
\newenvironment{thm}{\begin{prethm}{\hspace{-0.5
               em}{\bf.\ }}}{\end{prethm}}
\newtheorem{prealphthm}{{\bf Theorem}}
\newenvironment{alphthm}{\begin{prealphthm}{\hspace{-0.5
               em}{\bf.\ }}}{\end{prealphthm}}
\newtheorem{prealphlem}{{\bf Lemma}}
\newenvironment{alphlem}{\begin{prealphlem}{\hspace{-0.5
               em}{\bf.\ }}}{\end{prealphlem}}
\newtheorem{prepro}{{\bf Proposition}}
\newenvironment{pro}{\begin{prepro}{\hspace{-0.5
               em}{\bf.\ }}}{\end{prepro}}
\newtheorem{preprb}{{\bf Problem}}
\newtheorem{prerem}{{\bf Remark}}
\newtheorem{preapp}{{\bf Application}}
\newtheorem{prequ}{{\bf Question}}
\newtheorem{preclaim}{{\bf Claim}}
\def\conct[#1,#2]{\mbox {${#1} \leftrightarrow {#2}$}}
\def\dconct[#1,#2]{\mbox {${#1} \rightarrow {#2}$}}
\def\deg[#1,#2]{\mbox {$d_{_{#1}}(#2)$}}
\def\mindeg[#1]{\mbox {$\delta_{_{#1}}$}}
\def\maxdeg[#1]{\mbox {$\Delta_{_{#1}}$}}
\def\outdeg[#1,#2]{\mbox {$d_{_{#1}}^{^+}(#2)$}}
\def\minoutdeg[#1]{\mbox {$\delta_{_{#1}}^{^+}$}}
\def\maxoutdeg[#1]{\mbox {$\Delta_{_{#1}}^{^+}$}}
\def\indeg[#1,#2]{\mbox {$d_{_{#1}}^{^-}(#2)$}}
\def\minindeg[#1]{\mbox {$\delta_{_{#1}}^{^-}$}}
\def\maxindeg[#1]{\mbox {$\Delta_{_{#1}}^{^-}$}}
\def\dre[#1,#2,#3]{\mbox {${\cal E}^{^{#3}}(#1,#2)$}}
\def\var[#1,#2]{\mbox {${\rm Var}_{_{#1}}(#2)$}}
\def\ls[#1]{\mbox {$\xi^{^{#1}}$}}
\def\hom[#1,#2]{\mbox {${\rm Hom}({#1},{#2})$}}
\def\onvhom[#1,#2]{\mbox {${\rm Hom^{v}}(#1,#2)$}}
\def\onehom[#1,#2]{\mbox {${\rm Hom^{e}}(#1,#2)$}}
\def\core[#1]{\mbox {$#1^{^{\bullet}}$}}
\def\cay[#1,#2]{\mbox {${\rm Cay}({#1},{#2})$}}
\def\sch[#1,#2,#3]{\mbox {${\rm Sch}({#1},{#2},{#3})$}}
\def\cays[#1,#2]{\mbox {${\rm Cay_{s}}({#1},{#2})$}}
\def\dirc[#1]{\mbox {$\stackrel{\rightarrow}{C}_{_{#1}}$}}
\def\cycl[#1]{\mbox {${\bf Z}_{_{#1}}$}}
\begin{document}
\footnotetext[1]{The research of Hossein Hajiabolhassan is supported by ERC advanced grant GRACOL.}
\begin{center}
{\Large \bf On Chromatic Number of Kneser Hypergraphs}\\
\vspace{0.3 cm}
{\bf Meysam Alishahi$^\dag$ and Hossein Hajiabolhassan$^\ast$\\
{\it $^\dag$ Department of Mathematics}\\
{\it Shahrood University of Technology, Shahrood, Iran}\\
{\tt meysam\_alishahi@shahroodut.ac.ir}\\
{\it $^\ast$ Department of Applied Mathematics and Computer Science}\\
{\it Technical University of Denmark}\\
{\it DK-{\rm 2800} Lyngby, Denmark}\\
{\it $^\ast$ Department of Mathematical Sciences}\\
{\it Shahid Beheshti University, G.C.}\\
{\it P.O. Box {\rm 19839-63113}, Tehran, Iran}\\
{\tt hhaji@sbu.ac.ir}\\
}
\end{center}
\begin{abstract}
\noindent
In this paper, in view of $Z_p$-Tucker lemma, we introduce a lower
bound for chromatic number of Kneser hypergraphs
which improves Dol'nikov-K{\v{r}}{\'{\i}}{\v{z}} bound. Next,
we introduce multiple Kneser hypergraphs and
we specify the chromatic number of some multiple Kneser hypergraphs.
For a  vector of positive integers $\vec{s}=(s_1,s_2,\ldots ,s_m)$ and a
partition $\pi=(P_1,P_2,\ldots ,P_m)$ of $\{1,2,\ldots,n\}$,
the multiple Kneser hypergraph ${\rm KG}^r(\pi; \vec{s};k)$ is a
hypergraph with the vertex set
$$V=\left\{A:\ A\subseteq P_1\cup P_2\cup\cdots \cup P_m,\ |A|=k,
\forall 1\leq i\leq m;\ |A\cap P_i|\leq s_i\right\}$$
whose edge set is consist of any $r$ pairwise disjoint vertices.
We determine the chromatic number of multiple Kneser hypergraphs
provided that $r=2$ or for any $1\leq i\leq m$, we have $|P_i|\leq 2s_i$.
In particular, one can see that if $|P_1|=|P_2|=\cdots=|P_m|=t$, $m\geq k$, and
$\vec{s}=(1,1,\ldots ,1)$, then $\chi({\rm KG}^2(\pi; \vec{s};k))= t(m-k+1)$.
This gives a positive answer to a problem of Naserasr and Tardif
[The chromatic covering number of a graph, Journal of Graph Theory, 51 (3): 199--204, (2006)].

A subset $S \subseteq [n]$ is almost $s$-stable if for any two distinct elements $i,j\in S$,
we have $|i-j|\geq s$.
The almost $s$-stable Kneser hypergraph ${\rm KG}^r(n,k)_{s-stab}^{\sim}$
has all $s$-stable subsets of $[n]$ as the vertex set and every $r$-tuple of
pairwise disjoint vertices forms an edge. Meunier [The chromatic number of almost stable
Kneser hypergraphs. J. Combin. Theory Ser. A, 118(6):1820--1828, 2011] showed for any positive integer $r$, $\chi({\rm KG}^r(n,k)_{2-stab}^{\sim})=\left\lceil {n-r(k-1) \over r-1}\right\rceil$. We extend this result to a large family of Schrijver hypergraphs. Finally, we present a colorful-type result which confirms   the existence of a completely multicolored complete bipartite graph in any coloring of a graph.

\noindent {\bf Keywords:}\ { Chromatic Number, Kneser Hypergraph, $Z_p$-Tucker Lemma, Tucker-Ky Fan's Lemma.}\\
{\bf Subject classification: 05C15}
\end{abstract}
\section{Introduction}
 In this section, we setup some notations and terminologies.
 Hereafter, the symbol $[n]$ stands for the set $\{1,\ldots, n\}$.
 A {\it hypergraph} ${\cal H}$ is an ordered pair $({\cal V(H)},{\cal E(H)})$,
 where ${\cal V(H)}$ (the {\it vertex set}) is a finite set and
 ${\cal E(H)}$ (the \emph{edge set}) is a family of distinct non-empty
 subsets of ${\cal V(H)}$. Throughout this paper,
 we suppose that ${\cal V(H)}=[n]$ for some positive integer $n$.
 Assume that $N=(N_1,N_2,\ldots,N_r)$, where $N_i$'s are pairwise disjoint subsets of $[n]$.
 The \emph{induced hypergraph} ${\cal H}_{|_N}$ has $\displaystyle \cup_{i=1}^rN_i$ and
 $\{A\in {\cal E(H)}:\ \exists i;\ 1\leq i\leq r, A\subseteq N_i\}$ as
 the vertex set and the edge set, respectively.
 If every edge of hypergraph has size $r$, then it is called an {\it $r$-uniform} hypergraph.
 A {\it $t$-coloring} of a hypergraph ${\cal H}$ is a mapping
 $h:{\cal V(H)}\longrightarrow [t]=\{1,2,\ldots,t\}$ such that every edge is~not monochromatic.
 The minimum $t$ such that there exists a $t$-coloring for
hypergraph ${\cal H}$ is termed its {\it chromatic number}, and is denoted by $\chi({\cal H})$.
If ${\cal H}$ has an edge of size 1, then we define the chromatic number of ${\cal H}$ to be infinite.
For any hypergraph ${\cal H}=({\cal V(H)},{\cal E(H)})$ and positive integer $r\geq 2$,
the \emph{Kneser hypergraph}
${\rm KG}^r({\cal H})$ has ${\cal E(H)}$ as its vertex set and  the edge
set consisting of all $r$-tuples of pairwise
disjoint edges of ${\cal H}$. It is known that for any graph $G$, there exists a
hypergraph ${\cal H}$ such that $G$ is isomorphic to ${\rm KG}^2({\cal H})$.

A subset $S \subseteq [n]$ is {\it $s$-stable} (reps. {\it almost $s$-stable})
if any two distinct elements of $S$ are at least
``at distance $s$ apart'' on the $n$-cycle (reps. $n$-path),
that is, $s\leq |i-j|\leq n-s$ (reps. $|i-j|\geq s$)
for distinct $i,j\in S$. Hereafter, for a subset $A\subseteq [n]$,
the symbols ${A\choose k}$, ${A\choose k}_s$, and ${A\choose k}_s^{\sim}$ stand
for the set of all $k$-subsets of $A$, the set of all $s$-stable $k$-subsets of $A$,
and the set of all almost $s$-stable $k$-subsets of $A$, respectively. One can see that
${A\choose k}_s\subseteq {A\choose k}_s^{\sim} \subseteq {A\choose k}$.
Assume that ${\cal H}_1=(A,{A\choose k})$, ${\cal H}_2=(A,{A\choose k}_s)$,
and ${\cal H}_3=(A,{A\choose k}_s^{\sim})$. Hereafter, for any positive integer $r\geq 2$,  the hypergraphs
${\rm KG}^r({\cal H}_1)$, ${\rm KG}^r({\cal H}_2)$, and  ${\rm KG}^r({\cal H}_3)$
are denoted by ${\rm KG}^r(A,k)$,
${\rm KG}^r(A,k)_{s-stab}$, and ${\rm KG}^r(A,k)_{s-stab}^{\sim}$, respectively. Also,
${\rm KG}^r([n],k)$, ${\rm KG}^r([n],k)_{s-stab}$, and ${\rm KG}^r([n],k)_{s-stab}^{\sim}$,
are denoted by ${\rm KG}^r(n,k)$,
${\rm KG}^r(n,k)_{s-stab}$, and ${\rm KG}^r(n,k)_{s-stab}^{\sim}$, respectively.
Moreover, hypergraphs ${\rm KG}^r(n,k)$,
${\rm KG}^r(n,k)_{s-stab}$, and ${\rm KG}^r(n,k)_{s-stab}^{\sim}$ are termed the {\it Kneser
hypergraph}, the {\it $s$-stable Kneser hypergraph},
and the {\it almost $s$-stable Kneser hypergraph}, respectively.
In 1955, Kneser \cite{MR0068536} conjectured $\chi({\rm KG}^2(n,k))= n-2k+2$.
Later, Lov\'{a}sz \cite{MR514625}, in  his fascinating paper, confirmed the
conjecture using algebraic topology. Next, Erd{\H o}s \cite{MR0465878} presented an upper bound for the
chromatic number of Kneser hypergraphs and conjectured the equality. In~\cite{MR857448},
this conjecture has been confirmed and it was shown
$\chi({\rm KG}^r(n,k))=\left\lceil {n-r(k-1) \over r-1}\right\rceil$.
In view of result of Erd{\H o}s \cite{MR0465878},
one can conclude that for any positive integer $r\geq 2$, we have
$\chi({\rm KG}^r(n,k)_{s-stab})\leq \chi({\rm KG}^r(n,k)_{s-stab}^{\sim})\leq
\left\lceil {n-r(k-1) \over r-1}\right\rceil$. Also, Meunier \cite{MR2793613}
has shown that when $s\geq r$, then $\chi({\rm KG}^r(n,k)_{s-stab})\leq \left\lceil {n-s(k-1)
\over r-1}\right\rceil$. Finding a lower bound for chromatic number of
hypergraphs has been studied in the literature, see
\cite{MR953021,MR1081939,MR1665335,MR2279672,MR2351519,MR1893009,MR2208423}.
As an interesting result, we have Dol'nikov-K{\v{r}}{\'{\i}}{\v{z}} bound. For a hypergraph ${\cal H}$,
the {\it $r$-colorability defect} of ${\cal H}$, say $cd_r({\cal H})$,
is the minimum number of vertices which should be excluded such that
the induced hypergraph on the remaining vertices has chromatic number at most $r$.

\begin{alphthm}{\rm (}Dol'nikov for $r=2$, K{\v{r}}{\'{\i}}{\v{z}} {\rm \cite{MR953021,MR1081939}}{\rm )}\label{DOLN}
For any hypergraph ${\cal H}$ and positive integer $r\geq 2$,
we have $$\chi({\rm KG}^r({\cal H}))\geq {cd_r({\cal H})\over r-1}.$$
\end{alphthm}

Alon et al. \cite{MR2448565}  constructed ideals in $\Bbb N$ which are not non-atomic but
they have the Nikod\'ym property by using stable Kneser hypergraphs.
In this regard, they studied the chromatic number of
$r$-stable Kneser hypergraph ${\rm KG}^r(n,k)_{r-stab}$ and presented the following conjecture.

\begin{alphcon}{\rm \cite{MR2448565}}\label{ASTABLE}
Let $k, r$ and $n$ be positive integers where $n\geq rk$ and $r\geq 2$. We have
$$\chi({\rm KG}^r(n,k)_{r-stab})= \left\lceil {n-r(k-1) \over r-1}\right\rceil.$$
\end{alphcon}

In \cite{MR2448565}, it has been shown that the aforementioned conjecture holds when $r$ is a power of $2$.
As an approach to Conjecture \ref{ASTABLE}, Meunier~\cite{MR2793613}
showed that $\chi({\rm KG}^r(n,k)_{2-stab}^{\sim})=\left\lceil {n-r(k-1) \over r-1}\right\rceil$ and he strengthened the above conjecture as follows.

\begin{alphcon}{\rm \cite{MR2793613}}
Let $k, r, s$ and $n$ be positive integers where $n\geq sk$ and $s\geq r\geq 2$. We have
$$\chi({\rm KG}^r(n,k)_{s-stab})= \left\lceil {n-s(k-1) \over r-1}\right\rceil.$$
\end{alphcon}

In \cite{jonsson}, it has been shown that if $r=2$ and $n$ is sufficiently large, then for $s\geq 4$ the aforementioned conjecture holds.

This paper is organized as follows. In Section 1, we set up notations and terminologies.
Section 2 will be concerned with Tucker's lemma and its generalizations.
In Section $3$, we introduce the $i^{\rm th}$ alternation number of hypergraphs, and
in view of $Z_p$-Tucker lemma, we present a lower bound for chromatic number of hypergraphs.
This lower bound improves the well-know Dol'nikov-K{\v{r}}{\'{\i}}{\v{z}}
lower bound (Theoreme~\ref{DOLN}). In fact, the alternation number of a
hypergraph can be considered as a generalization of colorability defect of hypergraphs.
Section 4 is devoted to multiple Kneser hypergraphs.
In this section, we determine the chromatic number of some
multiple Kneser hypergraphs. In particular, we present a generalization of a result of
Alon et al. \cite{MR857448} about chromatic number of Kneser hypergraphs.
In Section 5, we extend Meunier's result and it is shown that for any positive integers $k, n$ and $r$, if $r$ is an even integer
or $n\stackrel{r-1}{\not \equiv} k $,
then for \emph{Schrijver hypergraph} ${\rm KG}^r(n,k)_{2-stab}$,
we have $\chi({\rm KG}^r(n,k)_{2-stab})= \left\lceil {n-r(k-1) \over r-1}\right\rceil$.
In the last Section, in view of Tucker-Ky Fan's Lemma,
we prove a colorful-type result which confirms the existence of a completely
multicolored complete bipartite graph in any coloring of a graph in terms of its alternation number.
\section{Tucker's Lemma and Its Generalizations}
In this section, we present Tucker's lemma and some of its generalizations.
In fact, Tucker's lemma is a combinatorial version of the
Borsuk-Ulam theorem with several interesting applications.
For more about Bursuk-Ulam's theorem and Tucker's lemma, see \cite{MR1988723}.

Throughout this paper, for any positive integer $r$, let ${\cal Z}_r=\{\omega_1,\omega_2,\ldots,\omega_r\}$
be a set of size $r$ where $0\not \in {\cal Z}_r$. Moreover, when $p$ is a prime integer,
we assume that ${\cal Z}_p$ is a \emph{cyclic group} of order $p$ and
\emph{generator} $\omega$, i.e., ${\cal Z}_p=Z_p=\{\omega,\omega^2,\ldots,\omega^p\}$.  In particular, when
$r=2$, we set ${\cal Z}_2=Z_2=\{\omega,\omega^2\}=\{-1,+1\}$.
Consider a \emph{ground set} $S$ such that $0\not \in S$ and $|S|\geq 2$.
Assume that  $X=(x_1,x_2,\ldots,x_n)$ is a sequence of $S\cup\{0\}$.
The subsequence $x_{j_1}, x_{j_2},\ldots,x_{j_m}$ (${j_1}<{j_2}<\cdots<{j_m}$)
is said to be an {\it alternating sequence} if any two consecutive terms in this subsequence are different.
We denote by $alt(X)$ the size of a longest alternating subsequence of non-zero terms in $X$.
For instance, if $S={\cal Z}_4$ and
$X=(\omega_4,0,\omega_2, \omega_1,0,\omega_1,\omega_3,\omega_1)$, then $alt(X)=5$.

One can consider $({\cal Z}_r\cup \{0\})^n$ as the set of all
{\it singed subsets} of $[n]$, that is, the
family of all $(X^1,X^2,\ldots,X^r)$ of disjoint subsets of $[n]$. Precisely,
for $X=(x_1,x_2,\ldots,x_n)\in({\cal Z}_r\cup \{0\})^n\setminus \{(0,0,\ldots,0)\}$ and $j\in[r]$, we define
$X^j=\{i\in [n]:\ x_i=\omega_j\}$. Throughout of this paper, for any $X\in ({\cal Z}_r\cup \{0\})^n$,
we use these representations interchangeably, i.e., $X=(x_1,x_2,\ldots,x_n)$ or $X=(X^1,X^2,\ldots,X^r)$.
Assume $X,Y\in ({\cal Z}_r\cup\{0\})^n$. By $X\preceq Y$,
we mean $X^i\subseteq Y^i$ for each $i\in[r]$.
Note that if $X\preceq Y$, then any alternating subsequence of $X$
is also an alternating subsequence of $Y$ and therefore $alt(X)\leq alt(Y)$.
Also, note that if the first non-zero term in $X$ is $\omega_j$,
then any alternating subsequence of $X$ of maximum length begins with $\omega_j$ and also,
we can conclude that $X^j$  contains the smallest integer.
For a permutation $\sigma$ of $[n]$, by $alt_\sigma(X)$,
we denote the length of a longest alternating subsequence of non-zero signs in
$(x_{\sigma(1)},x_{\sigma(2)},\ldots,x_{\sigma(n)})$, i.e., $alt((x_{\sigma(1)},x_{\sigma(2)},\ldots,x_{\sigma(n)}))$. In this terminology,
$alt(X)$ is the same as $alt_I(X)$, where $I$ is the identity permutation.
Now, we are in a position to introduce Tucker's lemma.

\begin{alphlem}
{\rm(}Tucker's lemma {\rm \cite{MR0020254} )} Suppose that $n$ is a positive integer and
$\lambda:\{-1,0,+1\}^n\setminus \{(0,\ldots,0)\} \longrightarrow \{\pm 1, \pm 2,\ldots ,\pm (n-1)\}$.
Also, assume that
for any signed set $X\in \{-1,0,+1\}^n\setminus \{(0,\ldots,0)\}$, we have $\lambda(-X)=-\lambda(X)$.
Then, there exist two signed sets
$X$ and $Y$ such that $X\preceq Y$ and also $\lambda(X) =-\lambda(Y)$.
\end{alphlem}

There exist several interesting applications of Tucker's lemma in combinatorics.
Among them, one can consider a combinatorial proof for Lov{\'a}sz-Kneser's
Theorem by Matousek \cite{MR2057690}. Also, there are various generalizations of
Tucker's lemma. Next lemma  is a combinatorial variant of $Z_p$-Tucker Lemma proved and modified
in \cite{MR1893009} and \cite{MR2793613}, respectively.
\begin{alphlem}{\rm (}$Z_p$-Tucker Lemma{\rm)}
Suppose that $n, m, p$ and $\alpha$ are non-negative integers, where $m,n\geq 1$, $m\geq \alpha \geq 0$, and $p$
is a prime number. Also, let
$$
\begin{array}{rl}
  \lambda:\ (Z_p\cup\{0\})^n\setminus\{(0,0,\ldots,0)\} & \longrightarrow Z_p\times[m] \\
  X & \longmapsto(\lambda_1(X),\lambda_2(X))
\end{array}
$$
be a map satisfying the following properties:
\begin{itemize}
\item $\lambda$ is a $Z_p$-equivariant map, that is, for each $\omega^j\in Z_p$,
we have $\lambda(\omega^jX)=(\omega^j\lambda_1(X),\lambda_2(X))$;
\item for all $X_1\preceq X_2\in (Z_p\cup\{0\})^n\setminus\{(0,0,\ldots,0)\}$,
      if $\lambda_2(X_1)=\lambda_2(X_2)\leq\alpha$, then $\lambda_1(X_1)=\lambda_1(X_2)$;
\item for all $X_1\preceq X_2\preceq\cdots \preceq X_p\in (Z_p\cup\{0\})^n\setminus\{(0,0,\ldots,0)\}$,
      if $\lambda_2(X_1)=\lambda_2(X_2)=\cdots=\lambda_2(X_p)\geq\alpha+1$,
      then the $\lambda_1(X_i)$'s are not pairwise distinct for $i=1,2,\ldots,p$.
\end{itemize}
Then $\alpha+(m-\alpha)(p-1) \geq n$.
\end{alphlem}

Another interesting generalization of the Borsuk-Ulam theorem is
Ky~Fan's lemma \cite{MR0051506}. This lemma has been used in some papers to
study some coloring properties of graphs, see \cite{MR2763055,MR2837625} .

\begin{alphlem}{\rm (}Tucker-Ky Fan's lemma {\rm \cite{MR0051506})}
Assume that $m$ and $n$ are positive integers and
$\lambda:\{-1,0,+1\}^n\setminus \{(0,\ldots,0)\} \longrightarrow \{\pm 1, \pm 2,\ldots ,\pm m\}$
satisfying the following properties:
\begin{enumerate}
\item for any $X\in \{-1,0,+1\}^n\setminus \{(0,\ldots,0)\}$, we have $\lambda(-X)=-\lambda(X)$ {\rm (}a
      $Z_2$-equivariant map{\rm )}
\item there are no any two signed sets
$X$ and $Y$ such that $X\preceq Y$ and $\lambda(X) =-\lambda(Y)$.
\end{enumerate}
Then there are $n$ signed sets
$X_1\preceq X_2\preceq \cdots \preceq X_n$ such that $\{\lambda(X_1),\ldots,\lambda(X_n)\}=\{+a_1,-a_2\ldots ,
(- 1)^{n-1}a_n\}$ where  $1\leq a_1 < \cdots < a_n \leq m$. In
particular $m \geq n$.
\end{alphlem}
\section{An Improvement of Dol'nikov-K{\v{r}}{\'{\i}}{\v{z}} Theorem}
For a hypergraph ${\cal F}\subseteq 2^{[n]}$, a permutation $\sigma$ of $[n]$
(a \emph{linear ordering} of $[n]$),
and positive integers $i$ and $r\geq 2$,
set $alt_{r,\sigma}({\cal F}, i)$ to be the largest integer $k$ such that there
exists an $X\in({\cal Z}_r\cup\{0\})^n\setminus \{(0,0,\ldots,0)\}$
with $alt_\sigma(X)=k$  and that the chromatic number of
hypergraph ${\rm KG}^r({\cal F}_{|X})$ is at most $i-1$.
Indeed, $alt_{r,\sigma}({\cal F},1)$ is the largest integer $k$ such that there
exists an $X=(X^1,X^2,\ldots,X^r)\in({\cal Z}_r\cup\{0\})^n\setminus \{(0,0,\ldots,0)\}$
with $alt_\sigma(X)=k$ and none of the $X^j$'s contain any member of ${\cal F}$. For instance, one can see that
$alt_{r,I}({[n]\choose k}_{2})=r(k-1)+1$. Also, hereafter, $alt_{r,\sigma}({\cal F}, 1)$ is denoted by
$alt_{r,\sigma}({\cal F})$.
Now, set $alt_r({\cal F},i)=\min\{alt_{r,\sigma}({\cal F},i);\ \sigma\in S_n\}$.
Also, $alt_r({\cal F},i)$ is
termed the $i^{th}$ {\it alternation number} of ${\cal F}$ (with respect to $r$) and
the {\it first alternation number} of ${\cal F}$ is denoted by $alt_r({\cal F})$.
In this terminology, one can see that if $i>\chi({\rm KG}^r({\cal F}))$, then
$alt_r({\cal F},i)=n$.

For a hypergraph ${\cal F}\subseteq 2^{[n]}$,  define $M_r({\cal F})$ to be the maximum size of a set
$T\subseteq [n]$ such that
the induced hypergraph on ${\cal F}_{|_T}$, is an $r$-colorable hypergraph. One can see that
$cd_r({\cal F})=n-M_r({\cal F})$. In view of Theorem \ref{DOLN},
we know $\chi({\rm KG}^r({\cal F}))\geq {cd_r({\cal F})\over r-1}={n-M_{r}({\cal F})\over r-1}$.
In this section, we show that
$\chi({\rm KG}^r({\cal F}))\geq \frac{n-alt_r({\cal F})}{r-1}$.
In fact, Theorem~\ref{DOLN} is an immediate consequence of this result.
To see this, one can check that for any permutation $\sigma$ of $[n]$, we have
$M_r({\cal F}) \geq alt_{r,\sigma}({\cal F})$. According to the definition
of $alt_{r,\sigma}({\cal F})$, there is an
$X\in({\cal Z}_r\cup\{0\})^n\setminus\{(0,0,\ldots,0)\}$, such that none of
$X^k$'s ($1\leq k\leq n$) contain any member of ${\cal F}$ and that $alt_\sigma(X)=alt_{r,\sigma}({\cal F})$.
Set $T=\displaystyle\cup_{a=1}^{r} X^a$. Obviously, $(X^1,\ldots,X^r)$ is a proper
$r$-coloring of ${\cal F}_{|_T}$.
This implies that $|X|\leq M_r({\cal F})$. On the other hand,
$alt_\sigma(X)\leq |X|$ and therefore
$$\chi({\rm KG}^r({\cal F}))\geq {n-alt_{r,\sigma}({\cal F})\over r-1}
\geq {n-|X|\over r-1}\geq {n-M_r({\cal F})\over r-1}={cd_r({\cal F})\over r-1}.$$

Assume that $h$ is a proper coloring of $H={\rm KG}^r({\cal F})$ with colors $\{1,2,\ldots,t\}$
where $\varnothing\not={\cal F}\subseteq 2^{[n]}$.
For any subset $B\subseteq [n]$, we define $\bar{h}(B)=\max\{h(S):\ S\subseteq B,\ S\in {\cal F}\}$, if there is no $S\subseteq B$ where $S\in {\cal F}$,
then set $\bar{h}(B)=0$. For any $X=(X^1,X^2,\ldots,X^r)\in ({\cal Z}_r\cup\{0\})^n\setminus\{(0,0,\ldots,0)\}$, define
$\bar{h}(X)=\max\{\bar{h}(X^1),\bar{h}(X^2),\ldots,\bar{h}(X^r)\}$, i.e.,
$$\bar{h}(X)=\max\{h(A):\ A\in {\cal F}\ \&\ \exists j\in[r]\ s.t.\ A\subseteq X^j\}.$$

Now, we are ready to improve Theorem~\ref{DOLN}.
\begin{lem}\label{gendol}
Assume that ${\cal F}\subseteq 2^{[n]}$ is a hypergraph, and $p$ is a prime number.
For any positive integer $i$ where $i\leq\chi({\rm KG}^p({\cal F}))+1$, we have
$$\chi({\rm KG}^p({\cal F}))\geq \frac{n-alt_p({\cal F},i)}{p-1}+i-1.$$
\end{lem}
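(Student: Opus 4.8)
The plan is to apply the $Z_p$-Tucker Lemma with $p$ in place of the prime, using the proper coloring $h$ of ${\rm KG}^p({\cal F})$ to manufacture a map $\lambda$ that meets the three hypotheses of that lemma, and then to read off the desired inequality from its conclusion $\alpha+(m-\alpha)(p-1)\geq n$. Suppose $\chi({\rm KG}^p({\cal F}))=t$, and fix a proper $t$-coloring $h$. First I would choose the permutation $\sigma\in S_n$ that realizes the minimum in the definition of $alt_p({\cal F},i)$, so that $alt_{p,\sigma}({\cal F},i)=alt_p({\cal F},i)$; by relabeling $[n]$ we may as well assume $\sigma$ is the identity. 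The key construction is to define, for each nonzero signed set $X\in(Z_p\cup\{0\})^n$, a value $\lambda(X)=(\lambda_1(X),\lambda_2(X))\in Z_p\times[m]$ for an appropriate $m$. The second coordinate $\lambda_2$ should encode a ``level'': when ${\rm KG}^p({\cal F}_{|X})$ has chromatic number at most $i-1$ (equivalently when $alt(X)$ is small enough that $X$ witnesses the low-alternation regime) I would set $\lambda_2(X)=alt(X)$ and let $\lambda_1(X)$ record the sign $\omega_j$ of the first nonzero coordinate of a longest alternating subsequence; in the complementary regime I would set $\lambda_2(X)=alt_p({\cal F},i)+\bar h(X)$ and let $\lambda_1(X)$ record the sign of the block on which the color-maximizing edge $A$ of $\bar h(X)$ sits.

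The parameters should be set as $\alpha=alt_p({\cal F},i)$ and $m=\alpha+(t-i+1)$, so that the second coordinate takes values in $[m]$: the low regime contributes values $1,\dots,\alpha$ and the high regime contributes values $\alpha+1,\dots,\alpha+(t-i+1)$ since $\bar h(X)$ ranges over the colors that exceed the chromatic threshold $i-1$, i.e. over $\{i,i+1,\dots,t\}$ after a suitable shift. The three properties then have to be verified in turn. Equivariance of $\lambda$ is immediate because cyclically shifting all signs by $\omega^j$ multiplies the recorded first-sign by $\omega^j$ while leaving $alt$, $\bar h$, and hence $\lambda_2$ untouched. For the monotonicity property in the low regime ($\lambda_2\leq\alpha$), if $X_1\preceq X_2$ have equal $alt$ values then they share the same longest-alternating-subsequence first sign, so $\lambda_1(X_1)=\lambda_1(X_2)$. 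The heart of the argument is the third property: given $X_1\preceq\cdots\preceq X_p$ all landing at the same high level $\alpha+c$ with $c\geq1$, I must show the first coordinates cannot be pairwise distinct. Here is where the coloring does its work: if the $\lambda_1(X_\ell)$ were all distinct they would exhaust $Z_p$, so the $p$ edges $A_\ell\subseteq X_\ell^{j_\ell}$ witnessing $\bar h(X_\ell)=c$ would lie in pairwise distinct blocks; since $X_1\preceq\cdots\preceq X_p$ forces these blocks to be pairwise disjoint, the $A_\ell$ form a $p$-tuple of pairwise disjoint edges of ${\cal F}$, all of the same color $c$ — contradicting that $h$ is a proper coloring of ${\rm KG}^p({\cal F})$.

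The main obstacle I anticipate is the bookkeeping needed to make the two regimes patch together into a single well-defined map with exactly the right range $Z_p\times[m]$, and in particular ensuring that the low regime genuinely caps at $\lambda_2=\alpha$. By the very definition of $alt_p({\cal F},i)$ as a minimum over $\sigma$, any $X$ with $alt(X)>alt_p({\cal F},i)=\alpha$ must have $\chi({\rm KG}^p({\cal F}_{|X}))\geq i$, which is what licenses me to place such an $X$ in the high regime where $\bar h(X)\geq i$ is guaranteed, so $\lambda_2(X)\geq\alpha+1$; conversely $X$ placed in the low regime satisfies $alt(X)\leq\alpha$, keeping $\lambda_2\leq\alpha$. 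Once the three properties hold, the $Z_p$-Tucker Lemma yields $\alpha+(m-\alpha)(p-1)\geq n$, that is $alt_p({\cal F},i)+(t-i+1)(p-1)\geq n$. Solving for $t=\chi({\rm KG}^p({\cal F}))$ gives exactly
$$\chi({\rm KG}^p({\cal F}))\geq\frac{n-alt_p({\cal F},i)}{p-1}+i-1,$$
completing the proof.
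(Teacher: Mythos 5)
Your proposal is correct and follows essentially the same route as the paper: the same choice of $\alpha=alt_{p,I}({\cal F},i)$ and $m=\alpha+(t-i+1)$, the same two-regime definition of $\lambda$ (first nonzero sign plus $alt_I(X)$ in the low regime, the shifted value $\alpha+\bar h(X)-i+1$ in the high regime), the same verification of the three $Z_p$-Tucker hypotheses, and the same final arithmetic. The only detail you gloss over is making the choice of the block index $j$ in the high regime canonical (the paper fixes a total ordering on $2^{[n]}$ and takes the largest maximizing edge) so that $\lambda$ is well defined and genuinely $Z_p$-equivariant; this is a minor bookkeeping point, not a gap in the argument.
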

\begin{proof}{
Consider an arbitrary total ordering $\leqslant$ on $2^{[n]}$.
To prove the assertion, it is enough to show that for any $\sigma\in S_n$, we have
$\chi({\rm KG}^p({\cal F}))\geq \frac{n-alt_{p,\sigma}({\cal F},i)}{p-1}+i-1$.
Without loss of generality, we can suppose $\sigma=I$.
Let  ${\rm KG}^p({\cal F})$ be properly colored
with $C$ colors $\{1,2,\ldots,C\}$.
For any  $F \in {\cal F}$, we denote its color by $h(F)$.
Set $\alpha=alt_{p,I}({\cal F},i)$ and $m=alt_{p,I}({\cal F},i)+C-i+1$.

Now, define a map $\lambda:\ (Z_p\cup\{0\})^n\setminus\{(0,0,\ldots,0)\}  \longrightarrow Z_p\times[m]$ as follows
\begin{itemize}
\item If $X\in ({\cal Z}_r\cup\{0\})^n\setminus\{(0,0,\ldots,0)\}$ and
      $alt_I(X)\leq alt_{p,I}({\cal F},i)$, set $\lambda(X)=(\omega^j,alt_I(X))$,
      where $j$ is the index of set $X^j$ containing the smallest integer ($\omega^j$ is
      then the first non-zero term in $X=(x_1, . . . , x_n)$).

\item If $X\in ({\cal Z}_r\cup\{0\})^n\setminus\{(0,0,\ldots,0)\}$ and
      $alt_I(X)\geq alt_{p,I}({\cal F},i)+1$, in view of definition of $alt_{p,I}({\cal F},i)$,
      the chromatic number of ${\rm KG}^p({\cal F}_{|_{X}})$ is at least $i$.
      Set $\lambda(X)=(\omega^j,\bar{h}(X)-i+1+\alpha)$, where $j$ is a positive integer such that
      there is an $A\in{\cal F}$ where $A\subseteq X^j$,
      $h(A)=\bar{h}(X)$, and $A$ is the biggest such a subset respect to
      the total ordering $\leqslant$ (note that $\bar{h}(X)\geq i$).
\end{itemize}
One can check that $\lambda$ is a $Z_p$-equivariant map from $(Z_p\cup\{0\})^n\setminus\{(0,0,\ldots,0)\}$
to $Z_p\times [m]$.

Let $X_1\preceq X_2\in (Z_p\cup\{0\})^n\setminus\{(0,0,\ldots,0)\}$.
If $\lambda_2(X_1)=\lambda_2(X_2)\leq \alpha$, then the size of longest alternating subsequences of non-zero
terms of $X_1$ and $X_2$ are the same. Therefore, the first non-zero terms of $X_1$
and $X_2$ are equal; and equivalently, $\lambda_1(X_1)=\lambda_1(X_2)$.

Assume that $X_1\preceq X_2\preceq\cdots\preceq X_p\in (Z_p\cup\{0\})^n\setminus\{(0,0,\ldots,0)\}$
such that $\lambda_2(X_1)=\lambda_2(X_2)=\cdots=\lambda_2(X_p)\geq\alpha+1$.
According to the definition of $\lambda$,
for each $1\leq a\leq p$, there are $F_a\in{\cal F}$ and $j_a\in [p]$ such that
$F_a\subseteq X^{j_a}_a$ and $\lambda_2(X_a)=h(F_a)+i-1+\alpha$.
This implies that $h(F_1)=h(F_2)=\cdots=h(F_p)$.
If $|\{j_1,j_2,\ldots,j_p\}|=p$,
then $\{F_1,F_2,\ldots,F_p\}$ is an edge in ${\rm KG}^p({\cal F})$.
But, this is a contradiction because $h$ is a proper coloring and $h(F_1)=h(F_2)=\cdots=h(F_p)$.

Now, we can apply the $Z_p$-Tucker Lemma and conclude that
$n\leq alt_{p,I}({\cal F},i)+(C-i+1)(p-1)$ and so
$C\geq {n-alt_{p,I}({\cal F},i)\over p-1}+i-1$.
}\end{proof}
\begin{lem}\label{altrs}
Suppose that $r,s$ and $n$ are positive integers,
$X=(X^1,X^2,\ldots,X^r)\in({\cal Z}_r\cup\{0\})^n\setminus\{(0,0,\ldots,0)\}$, and $\sigma$ is a permutation of $[n]$.
Also, assume that for each $j\in[r]$,
$Y^{1j},Y^{2j},\ldots,Y^{sj}$ are disjoint subsets of $X^j$. If we set
$$Y_j=(Y^{1j},Y^{2j},\ldots,Y^{sj})\in({\cal Z}_s\cup\{0\})^{n}\setminus\{(0,0,\ldots,0)\}$$
and
$$Z=\left(Y^{11},\ldots,Y^{s1}, \ldots,Y^{1r},\ldots,Y^{sr}\right)
\in\left({\cal Z}_{rs}\cup \{0\}\right)^n\setminus\{(0,0,\ldots,0)\},$$
then $alt_\sigma(Z)\geq\displaystyle \sum_{i=1}^r alt_{\sigma|_{X^i}}(Y_i)$.
\end{lem}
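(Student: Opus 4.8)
The plan is to build, for each block $i\in[r]$ separately, a longest alternating subsequence realizing $alt_{\sigma|_{X^i}}(Y_i)$, and then to show that the union of these blockwise subsequences is itself an alternating subsequence of $Z$ in the ordering induced by $\sigma$. Concretely, for each $i$ I fix a set $L_i\subseteq X^i$ of positions that, read in the order $\sigma$, realize a longest alternating subsequence of non-zero terms of $Y_i$; thus $|L_i|=alt_{\sigma|_{X^i}}(Y_i)$. Since the $X^i$ are pairwise disjoint, so are the $L_i$; I put $L=\bigcup_{i=1}^r L_i$ and consider the subsequence of $Z$ supported on $L$ and read in the order $\sigma(1),\sigma(2),\ldots,\sigma(n)$. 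The whole lemma then reduces to the claim that this subsequence is alternating, for in that case $alt_\sigma(Z)\geq |L|=\sum_{i=1}^r alt_{\sigma|_{X^i}}(Y_i)$.

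The key is a bookkeeping observation about how $Z$ encodes the data $(i,l)$. Because the sets $Y^{lj}$ are disjoint subsets of the pairwise disjoint sets $X^j$, every element $a\in[n]$ with a non-zero $Z$-sign lies in exactly one $Y^{lj}$; its block is forced to be $j=i$ whenever $a\in X^i$, and in the listing defining $Z$ the component $Y^{lj}$ occupies coordinate $(j-1)s+l$. Hence the $Z$-sign of such an $a$ is the generator $\omega_{(i-1)s+l}$ of ${\cal Z}_{rs}$, while its $Y_i$-sign is $\omega_l\in{\cal Z}_s$. Two consequences are immediate: (i) if $a,b$ lie in the same block $X^i$, their $Z$-signs agree if and only if their $Y_i$-signs agree, so ``alternating in $Z$'' and ``alternating in $Y_i$'' coincide inside a block; and (ii) if $a\in X^i$ and $b\in X^{i'}$ with $i\neq i'$, then their $Z$-signs lie in the disjoint index ranges $\{(i-1)s+1,\ldots,is\}$ and $\{(i'-1)s+1,\ldots,i's\}$, so they are automatically distinct.

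With this in hand I verify the claim by examining an arbitrary pair of consecutive elements $a,b$ of $L$ in the $\sigma$-order. If $a$ and $b$ lie in different blocks, consequence (ii) gives distinct $Z$-signs. If they lie in the same block $X^i$, then they must be consecutive within $L_i$: any element of $L_i$ lying strictly between them in the $\sigma$-order would also lie in $L$ and between $a$ and $b$, contradicting their consecutiveness in $L$. By the choice of $L_i$ their $Y_i$-signs differ, so by consequence (i) their $Z$-signs differ as well. In either case consecutive terms carry different non-zero $Z$-signs, proving that $L$ indexes an alternating subsequence of $Z$ and establishing the inequality.

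I expect the only delicate point to be the junction step of the last paragraph, namely arguing that two consecutive members of the merged sequence $L$ that happen to fall in the same block are still consecutive within that block's witness $L_i$; this is what lets me invoke the blockwise alternation. The cross-block case is handled for free by the disjoint-range structure of the ${\cal Z}_{rs}$-labeling built into the definition of $Z$, and the reduction $alt_{\sigma|_{X^i}}(Y_i)=alt_\sigma(Y_i)$, valid since the non-zero support of $Y_i$ lies in $X^i$, keeps the restriction notation harmless throughout.
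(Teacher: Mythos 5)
Your proof is correct and builds exactly the same witness as the paper's own argument: the union of blockwise longest alternating subsequences, which alternates in $Z$ because distinct blocks occupy disjoint ranges of ${\cal Z}_{rs}$-labels while within a block the $Z$-sign determines the $Y_i$-sign. The only difference is organizational --- the paper verifies alternation by first choosing a block-pure interval decomposition arising from a longest alternating subsequence of $X$ and splicing the $P_{j_i}$'s interval by interval, whereas you verify it directly by a case analysis on consecutive pairs of the merged index set, which is if anything cleaner.
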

\begin{proof}{
Without loss of generality, we can suppose $\sigma=I$.
Also, let $alt_I(X)=t$. If
$x_{a_1},x_{a_2},\cdots,x_{a_t}$ form an alternating subsequence of
$X$ ($1\leq a_1<a_2<\cdots<a_t\leq n$), then the set $\{a_1, a_2, \ldots, a_t\}$ is
called the \emph{index set} of this alternating subsequence. Choose an alternating subsequence
$x_{a_1},x_{a_2},\cdots,x_{a_t}$ of $X$ such that $a_1$ is the smallest integer
in $T=\displaystyle\cup_{j=1}^r X^j$ and that for each $i\in[t]$, there is a $j_i\in [r]$
where $[a_i,a_{i+1})\cap T\subseteq X^{j_i}$.
For each $j\in[r]$, assume that $P_j$ is a longest alternating subsequence of $Y_j$.
Now, we present an alternating subsequence $P$ of $Z$.
Construct $P$ such that for each $i\in[t]$, $P$ and $P_{j_i}$
have the same index set in $[a_i,a_{i+1})$.
It is straightforward to check that $P$ is an alternating subsequence of $Z$ and also,
$|P|=\sum_{i=1}^{r}alt_{I|_{X^i}}(Y_i)$.
}\end{proof}
Here, we extend Lemma~\ref{gendol} to any $r$ uniform hypergraph  ($r$ is not necessarily prime) for the first alternation number.
\begin{lem}\label{dolrs}
Let $r$ and $s$ be positive integers where $r,s\geq 2$. Also, assume that for any hypergraph ${\cal H}\subseteq 2^{[n]}$,
$\chi({\rm KG}^r({\cal H}))\geq {n-alt_{r}({\cal H})\over r-1}$
and $\chi({\rm KG}^s({\cal H}))\geq {n-alt_{s}({\cal H})\over s-1}$.
For any hypergraph ${\cal F}\subseteq 2^{[n]}$, we have $\chi({\rm KG}^{rs}({\cal F}))\geq {n-alt_{rs}({\cal F})\over rs-1}$.
\end{lem}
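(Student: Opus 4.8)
The plan is to reduce the $rs$-bound to the given $r$- and $s$-bounds by a product construction, gluing signed sets through Lemma~\ref{altrs}. Write $C=\chi({\rm KG}^{rs}({\cal F}))$ and fix a proper coloring $h$ of ${\rm KG}^{rs}({\cal F})$ with colors $[C]$. Since $alt_{rs}({\cal F})=\min_\sigma alt_{rs,\sigma}({\cal F})$, it suffices to prove $alt_{rs,\sigma}({\cal F})\geq n-(rs-1)C$ for every $\sigma$; relabelling $[n]$, I may assume $\sigma=I$. Rearranged, this is exactly the claimed inequality $C\geq\frac{n-alt_{rs}({\cal F})}{rs-1}$.

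First I would build an auxiliary hypergraph ${\cal G}\subseteq 2^{[n]}$ whose members are the unions $F_1\cup\cdots\cup F_s$ of $s$ pairwise disjoint members of ${\cal F}$ that receive the same color under $h$. The key observation is that $\chi({\rm KG}^r({\cal G}))\leq C$: color each $B\in{\cal G}$ by the largest color $c$ for which $B$ contains a monochromatic $s$-matching of color $c$; if $r$ pairwise disjoint members of ${\cal G}$ shared a color $c^*$, their union would contain $rs$ pairwise disjoint members of ${\cal F}$ all of color $c^*$, contradicting that $h$ is proper. Applying the hypothesised $r$-bound to ${\cal G}$ then yields $alt_{r,I}({\cal G})\geq alt_r({\cal G})\geq n-(r-1)C$, so there is a signed set $X=(X^1,\ldots,X^r)$ with $alt_I(X)=alt_{r,I}({\cal G})\geq n-(r-1)C$, none of whose parts contains a member of ${\cal G}$.

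The second ingredient is a translation: the statement ``$X^j$ contains no member of ${\cal G}$'' says precisely that $X^j$ carries no monochromatic $s$-matching, i.e. that $h$ restricted to ${\rm KG}^s({\cal F}_{|_{X^j}})$ is proper, whence $\chi({\rm KG}^s({\cal F}_{|_{X^j}}))\leq C$. I would then apply the hypothesised $s$-bound to each restricted hypergraph ${\cal F}_{|_{X^j}}$ on its ground set $X^j$ (identified with $[|X^j|]$ via the induced order $I|_{X^j}$), obtaining $alt_{s,\,I|_{X^j}}({\cal F}_{|_{X^j}})\geq |X^j|-(s-1)C$. Choosing a maximiser gives, for each $j$, a signed set $Y_j=(Y^{1j},\ldots,Y^{sj})$ of disjoint subsets of $X^j$ with $alt_{I|_{X^j}}(Y_j)\geq |X^j|-(s-1)C$ and no part containing a member of ${\cal F}$.

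Finally I would glue the $Y_j$ into a single $rs$-signed set $Z=(Y^{11},\ldots,Y^{s1},\ldots,Y^{1r},\ldots,Y^{sr})\in({\cal Z}_{rs}\cup\{0\})^n$; its parts are pairwise disjoint and none contains a member of ${\cal F}$. Lemma~\ref{altrs} then gives
$$alt_I(Z)\ \geq\ \sum_{j=1}^r alt_{I|_{X^j}}(Y_j)\ \geq\ \sum_{j=1}^r\bigl(|X^j|-(s-1)C\bigr)=|X|-r(s-1)C,$$
and since $|X|\geq alt_I(X)\geq n-(r-1)C$, the identity $(r-1)+r(s-1)=rs-1$ yields $alt_I(Z)\geq n-(rs-1)C$. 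As $Z$ witnesses $alt_{rs,I}({\cal F})\geq alt_I(Z)$, this holds for every $\sigma$, hence $alt_{rs}({\cal F})\geq n-(rs-1)C$, as required. I expect the main obstacle to be bookkeeping rather than a single hard idea: pinning down the two containment-to-coloring translations so the hypotheses apply verbatim, and above all correctly tracking the alternation counts through the restriction to the ground sets $X^j$ and the gluing step — exactly the role of Lemma~\ref{altrs} — so that the losses $(r-1)C$ and $r(s-1)C$ combine into the required $(rs-1)C$.
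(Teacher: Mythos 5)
Your proof is correct and is in essence the paper's argument run directly rather than by contradiction: the paper's auxiliary hypergraph ${\cal T}$ (the sets $N$ with $|N|-alt_{s,I|_N}({\cal F}|_N)>(s-1)K$) plays exactly the role of your ${\cal G}$ (unions of monochromatic $s$-matchings), and both proofs then apply the $r$-hypothesis to the auxiliary hypergraph, the $s$-hypothesis to the parts $X^j$ of the resulting signed set, and glue via Lemma~\ref{altrs} with the identical accounting $(r-1)C+r(s-1)C=(rs-1)C$. The only substantive difference is organizational --- you verify $\chi({\rm KG}^r({\cal G}))\leq C$ up front by the same ``$r$ disjoint monochromatic $s$-matchings give a monochromatic edge of ${\rm KG}^{rs}({\cal F})$'' observation that the paper uses as its final contradiction --- so the two proofs are interchangeable.
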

\begin{proof}{
It is enough to show that for any $\sigma\in S_n$,
$\chi({\rm KG}^{rs}({\cal F}))\geq {n-alt_{{rs},\sigma}({\cal F})\over rs-1}$.
Without loss of generality, we can suppose $\sigma=I$.
Let $K=\chi({\rm KG}^{rs}({\cal F}))$.
 On the contrary, suppose
\begin{equation}\label{Ialtrs}
{n-alt_{{rs},I}({\cal F})}>(rs-1)K.
\end{equation}
Define the hypergraph ${\cal T }\subseteq 2^{[n]}$ as follows
$${\cal T }=\left\{N\subseteq[n]:\ |N|-alt_{{s},{I_{|_N}}}({\cal F}_{|_N})>(s-1)K\right\}.$$

Now, according to the assumption of theorem and the definition of ${\cal T}$,
for each $N\in{\cal T}$, we have
$$(s-1)\chi({\rm KG}^s({\cal F}_{|_N}))\geq |N|-alt_{s,{I_{|_N}}}({\cal F}_{|_N})>(s-1)K.$$
Consequently,
\begin{equation}\label{IIaltrs}
\chi({\rm KG}^s({\cal F}_{|_N}))>K
\end{equation}
\noindent{\bf Claim}: $n-alt_{{r},I}({\cal T})> (r-1)K$.\\
Suppose, contrary to our claim, that $n-alt_{{r},I}({\cal T})\leq(r-1)K$ and so $alt_{{r},I}({\cal T})\geq n- (r-1)K$.
By definition of $alt_{{r},I}({\cal T})$, there is an
$X=(X^1,X^2,\ldots,X^r)\in({\cal Z}_r\cup\{0\})^n\setminus\{(0,0,\ldots,0)\}$
such that $alt_I(X)\geq n-(r-1)K$ and none of $X^j$'s contain any member of ${\cal T}$.
In particular, none of them lie in ${\cal T}$.
Therefore, by the definition of ${\cal T}$, we have
$$|X^i|-alt_{s,{I_{|_{X^i}}}}({\cal F}_{|_{X^i}})\leq (s-1)K.$$
It means $|X^i|-(s-1)K\leq alt_{s,{I_{|_{X^i}}}}({\cal F}_{|_{X^i}}).$
Therefore, for each $j\in[r]$, there are $s$ disjoint sets $Y^{j1},\ldots,Y^{js}\subseteq X^j$,
such that $alt_{{I_{|_{X^j}}}}(Y^{j1},\ldots,Y^{js})\geq |X^j|-(s-1)K$
and none of them contain any member of ${\cal F}_{|_{X^j}}$. In particular,
none of them contain any member of ${\cal F}$.
Set $$Z=\left(Y^{11},\ldots,Y^{1s},\ldots,Y^{r1},\ldots,Y^{rs}\right)
\in\left({\cal Z}_{rs}\cup \{0\}\right)^n\setminus\{(0,0,\ldots,0)\}.$$
By Lemma \ref{altrs},
$$\displaystyle alt_{I}(Z)\geq \sum_{j=1}^r alt_{I_{|_{X^j}}}(Y^{j1},\ldots,Y^{js})\geq
\sum_{j=1}^r \left(|X^j|-(s-1)K\right)\geq \left(\sum_1^r |X^j|\right)-r(s-1)K.$$
Note that
$\displaystyle\sum_{j=1}^r |X^j|\geq alt_{I}(X)$ and thus,
$$alt_{I}(Z)\geq alt_{I}(X)-r(s-1)K\geq n-(r-1)K-r(s-1)K=n-(sr-1)K.$$
Since $Z$ does not contain any member of ${\cal F}$, we get
$alt_{rs,I}({\cal F})\geq n-(sr-1)K$ which contradicts inequality~(\ref{Ialtrs}).
So we have proved the Claim.

By the assumption of theorem and the claim, we have
$$(r-1)\chi({\rm KG}^r({\cal T}))\geq n-alt_{r,I}({\cal T})>(r-1)K$$
and so,
\begin{equation}\label{IIIaltrs}
\chi({\rm KG}^r({\cal T}))>K.
\end{equation}

\noindent Now, consider a proper coloring $h:{\cal F}\longrightarrow [K]$ of ${\rm KG}^{rs}({\cal F})$.
By inequality~(\ref{IIaltrs}), in every $N\in{\cal T}$, there exists a color $i\in[K]$ which has been assigned to $s$ disjoint members of ${\cal F}_{|_N}$.
Now, define $h':{\cal T}\longrightarrow [K]$ such that $h'(N)$ is the maximum color which $h$ assigns
to $s$ disjoint sets in ${\cal F}_{|_N}$.
Now, according to inequality (\ref{IIIaltrs}),
there are $r$ sets $N_1,\ldots,N_r\in{\cal T}$ which are disjoint and
from $h'$ receive the same color $i_0=h'(N_j)$.
Thus we have $rs$ sets $F_{jk}\in{\cal F}$ such that $F_{jk}\subseteq N_j$
and also, they are disjoint and $h$ assigns them the same color which is a contradiction.
}\end{proof}
Next theorem is an immediate consequence of Lemma~\ref{gendol} (in the case $i=1$) and
Lemma~\ref{dolrs}.
\begin{thm}\label{dolthm}
For any hypergraph ${\cal F}\subseteq 2^{[n]}$ and positive integer $r\geq 2$, we have
$$\chi({\rm KG}^r({\cal F}))\geq \frac{n-alt_r({\cal F})}{r-1}.$$
\end{thm}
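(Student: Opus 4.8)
The plan is to reduce the general case to the prime case via the multiplicativity encoded in Lemma~\ref{dolrs}, and to dispatch the prime case directly from Lemma~\ref{gendol}. Concretely, I would prove by strong induction on $r\geq 2$ the following \emph{universally quantified} statement: for every $n$ and every hypergraph ${\cal H}\subseteq 2^{[n]}$, the inequality $\chi({\rm KG}^r({\cal H}))\geq \frac{n-alt_r({\cal H})}{r-1}$ holds. Quantifying over all hypergraphs and all ground sets from the outset is essential, since the hypothesis of Lemma~\ref{dolrs} demands the bound for both of its factors on arbitrary hypergraphs, not merely on the particular ${\cal F}$ of interest.

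For the base case, suppose $r$ is prime. Recalling the convention $alt_r({\cal F})=alt_r({\cal F},1)$, I would apply Lemma~\ref{gendol} with $p=r$ and $i=1$. Since $i=1\leq \chi({\rm KG}^r({\cal H}))+1$ always holds, the lemma collapses its right-hand side to $\frac{n-alt_r({\cal H},1)}{r-1}+(i-1)=\frac{n-alt_r({\cal H})}{r-1}$, which is precisely the desired inequality, and it is obtained for every ${\cal H}$.

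For the inductive step, suppose $r\geq 2$ is composite and write $r=ab$ with $2\leq a,b<r$. By the strong induction hypothesis, the bound holds for $a$ and for $b$, each over all hypergraphs on any ground set; these are exactly the two premises required by Lemma~\ref{dolrs}. Invoking that lemma with the pair $(a,b)$ then yields $\chi({\rm KG}^{ab}({\cal H}))\geq \frac{n-alt_{ab}({\cal H})}{ab-1}$, i.e.\ the bound for $r$. This closes the induction and establishes the theorem for every $r\geq 2$, in particular for the hypergraph ${\cal F}$.

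I do not anticipate a genuine obstacle, as the analytic content lives entirely in Lemmas~\ref{gendol} and~\ref{dolrs}; the only points requiring care are bookkeeping. First, the inductive statement must range over all hypergraphs so that Lemma~\ref{dolrs} can legitimately be applied to both of its factors. Second, one should check that $i=1$ is the correct specialization of Lemma~\ref{gendol}, namely that it reduces the general bound to the first-alternation-number bound that drives the whole recursion. Everything else is the standard multiplicative induction over the prime factorization of $r$.
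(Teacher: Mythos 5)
Your proposal is correct and matches the paper's own argument: the paper derives Theorem~\ref{dolthm} as "an immediate consequence of Lemma~\ref{gendol} (in the case $i=1$) and Lemma~\ref{dolrs}," which is exactly your prime base case plus the multiplicative induction over the factorization of $r$. You have merely made explicit the strong induction and the universal quantification over hypergraphs that the paper leaves implicit, both of which are the right bookkeeping points to flag.
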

Theorem \ref{dolthm} in general is better than
Dol'nikov-K{\v{r}}{\'{\i}}{\v{z}} lower bound.
Ziegler in \cite{MR1893009,MR2208423} showed that
$cd_r({[n]\choose k}_{t})=\max\{n-tr(k-1),0\}$.
Therefore, Dol'nikov-K{\v{r}}{\'{\i}}{\v{z}} Theorem implies that
$\chi({\rm KG}^r{(n, k)}_{2-stab})\geq {\max\{n-2r(k-1),0\}\over r-1}$.
Although, one can easily see that
$alt_{r,I}({[n]\choose k}_{2})=r(k-1)+1$ and thus
by Theorem \ref{dolthm},
$$\chi({\rm KG}^r{(n, k)}_{2-stab})\geq{n-r(k-1)-1\over r-1}> {\max\{n-2r(k-1),0\}\over r-1}.$$

It is easy to see that $alt_{2,I}({[n]\choose k}_{2},2)=2k-1$.
Therefore, in view of Lemma \ref{gendol} for $i=2$, we have the next corollary.
\begin{cor}{\rm \cite{MR512648}}
If $n$ and $k$ are positive integers where $n\geq 2k$, then we have $\chi({\rm KG}^2(n,k)_{2-stab})=n-2k+2$.
\end{cor}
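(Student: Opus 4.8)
The plan is to sandwich $\chi({\rm KG}^2(n,k)_{2-stab})$ between $n-2k+2$ from above and below. The upper bound is immediate: since every $2$-stable $k$-subset is in particular a $k$-subset, ${\rm KG}^2(n,k)_{2-stab}$ is an induced subgraph of the Kneser graph ${\rm KG}^2(n,k)$, so $\chi({\rm KG}^2(n,k)_{2-stab})\le \chi({\rm KG}^2(n,k))=n-2k+2$ by the Lov\'{a}sz--Kneser theorem. Alternatively one checks directly that $A\mapsto \min\{\min A,\ n-2k+2\}$ is a proper coloring with $n-2k+2$ colors: if two disjoint sets receive a common color $j<n-2k+2$ they share the element $j$, while all sets colored $n-2k+2$ lie in the $(2k-1)$-element block $\{n-2k+2,\dots,n\}$ and hence cannot be disjoint.

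For the lower bound I would invoke Lemma~\ref{gendol} with $p=2$, $i=2$, and ${\cal F}={[n]\choose k}_2$. Since $n\ge 2k$ guarantees $\alpha(C_n)=\lfloor n/2\rfloor\ge k$, there is at least one $2$-stable $k$-subset, so $\chi({\rm KG}^2({\cal F}))\ge 1$ and the hypothesis $i\le\chi({\rm KG}^2({\cal F}))+1$ is met. The lemma then yields $\chi({\rm KG}^2(n,k)_{2-stab})\ge n-alt_2({\cal F},2)+1$, so it remains to show $alt_2({\cal F},2)\le 2k-1$; because $alt_2({\cal F},2)=\min_\sigma alt_{2,\sigma}({\cal F},2)\le alt_{2,I}({\cal F},2)$, it suffices to prove the estimate for the identity ordering.

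The heart of the matter is thus the combinatorial bound $alt_{2,I}({[n]\choose k}_2,2)\le 2k-1$, i.e. that whenever $alt_I(X)\ge 2k$ the graph ${\rm KG}^2({\cal F}_{|_X})$ contains an edge. Here I would read $X=(X^1,X^2)$ as a $\{+,-\}$-sequence (ignoring zeros) and decompose its nonzero part into its $alt_I(X)$ maximal constant runs $B_1,B_2,\dots$, alternating in sign, with $\max B_a<\min B_{a+1}$ for every $a$. Taking the first $2k$ runs, let $A$ consist of one element chosen from each odd-indexed run (so $A\subseteq X^1$, $|A|=k$) and $B$ of one element from each even-indexed run (so $B\subseteq X^2$, $|B|=k$); then $A$ and $B$ are automatically disjoint. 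Any two chosen elements come from runs separated by at least one further run, hence by at least one integer, so their distance is $\ge 2$; for the cyclic constraint $|i-j|\le n-2$ one uses that the existence of $B_{2k}$ after $B_{2k-1}$ forces $\max A\le n-1$ while the existence of $B_1$ before $B_2$ forces $\min B\ge 2$, so the extreme gaps within $A$ and within $B$ are at most $n-2$. Thus $A,B$ are disjoint $2$-stable $k$-subsets and form an edge of ${\rm KG}^2({\cal F}_{|_X})$, proving the bound. The matching construction placing $k$ and $k-1$ singletons alternately on $\{1,\dots,2k-1\}$ (with the remaining coordinates $0$) gives $alt_{2,I}({\cal F},2)=2k-1$, although only the upper estimate is needed above.

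Finally I would assemble the pieces: $alt_2({\cal F},2)\le 2k-1$ feeds into $\chi({\rm KG}^2(n,k)_{2-stab})\ge n-(2k-1)+1=n-2k+2$, which together with the upper bound yields the claimed equality. The only genuine obstacle is the cyclic boundary case in the alternation estimate — ensuring the extracted sets $A,B$ are $2$-stable on the $n$-cycle and not merely on the path — but this is handled cleanly by the observation that a block structure with $2k$ runs cannot place its extreme nonzero elements at both $1$ and $n$.
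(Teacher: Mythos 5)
Your proposal is correct and follows essentially the same route as the paper: the lower bound comes from Lemma~\ref{gendol} with $p=2$, $i=2$ applied to ${\cal F}={[n]\choose k}_2$ together with the estimate $alt_{2,I}({[n]\choose k}_2,2)\le 2k-1$ (which the paper merely asserts as ``easy to see'' and you verify via the run decomposition, including the cyclic boundary check), and the upper bound from the inclusion into ${\rm KG}^2(n,k)$.
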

\section{Multiple Kneser Graphs}
Throughout this section, we assume that $k, r, n$ and $m$ are
positive integers where $r\geq 2$ and $k\geq 1$.
Furthermore, suppose that $\pi=(P_1,P_2,...,P_m)$ is a
\emph{partition} of $[n]$ and  $\vec{s}=(s_1,s_2,...,s_m)$ is a positive integer vector
where $k\leq \displaystyle\sum_{i=1}^ms_i$ and for any $1\leq i\leq m$,
we have $s_i\leq |P_i|$. The \emph{multiple Kneser hypergraph} ${\rm KG}^r(\pi; \vec{s};k)$
is a hypergraph with the vertex set
$$V=\left\{A:\ A\subseteq P_1\cup P_2\cup\cdots \cup P_m,\ |A|=k,
\forall 1\leq i\leq m;\ |A\cap P_i|\leq s_i\right\},$$
where $\{A_1,\ldots,A_r\}$ is an edge if $A_1,A_2,\ldots,A_r$ are pairwise disjoint.
In the sequel, we determine the chromatic number of multiple Kneser hypergraphs
provided that $r=2$ or for any $1\leq i\leq m$, we have $|P_i|\leq 2s_i$.
In this regard, we define the function
$f_{r,{\pi}}$ as follows
$$f_{r,{\pi}}(P_i)= \left \{
\begin{array}{ll}
rs_i & {\rm if}\ |P_i| \geq rs_i  \\
 &\\
|P_i| & {\rm otherwise}.
\end{array}\right. $$

Also, set
$$M_{r,\pi}=\max\left\{rk-1+\sum_{j=1}^t (|P_{i_j}|-f_{r,{\pi}}(P_{i_j}))\
:\ t\in \mathbb{N}\quad \& \quad \sum_{j=1}^t f_{r,{\pi}}(P_{i_j})\leq rk-1\right\}.$$

In the next theorem, we give an upper bound for the chromatic number of multiple Kneser
hypergraph ${\rm KG}^r(\pi; \vec{s};k)$ in terms of $n$ and $M_{r,\pi}$.
\begin{lem}\label{upper}
Let $k, r, n,$ and $m$ be positive integers where $r\geq 2$ and $k\geq 1$. Also,
assume that $\pi=(P_1,P_2,...,P_m)$ is a
partition of $[n]$ and $\vec{s}=(s_1,s_2,...,s_m)$ is a positive integer vector where $k\leq \displaystyle\sum_{i=1}^ms_i$.
We have $$\chi({\rm KG}^r(\pi; \vec{s};k))\leq \max\{1,\left\lceil\frac{n-M_{r,\pi}}{r-1}+1\right\rceil \}.$$
\end{lem}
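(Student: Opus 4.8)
The plan is to produce an explicit proper coloring of ${\rm KG}^r(\pi;\vec{s};k)$ with the required number of colors; for an upper bound no topology is needed. The guiding idea is that $M_{r,\pi}$ is exactly the size of a largest ``core'' $T\subseteq[n]$ in which no $r$ vertices can be pairwise disjoint, and that once such a core is fixed the remaining $n-M_{r,\pi}$ elements are handled by a classical Erd\H{o}s-type block coloring. The one elementary fact I would isolate, valid for every $T\subseteq[n]$, is this: if $A_1,\dots,A_r$ are pairwise disjoint vertices contained in $T$, then $rk=\sum_a|A_a|=\sum_i\sum_a|A_a\cap P_i|$, and for each $i$ the inner sum is bounded both by $|T\cap P_i|$ (disjointness) and by $rs_i$ (since $|A_a\cap P_i|\le s_i$); hence $T$ contains no $r$ pairwise disjoint vertices whenever $\sum_{i=1}^m\min(|T\cap P_i|,rs_i)\le rk-1$. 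Note that only this easy direction is needed for the upper bound.

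First I would build a core of size $M_{r,\pi}$ realizing that inequality. Choose an index set $J=\{i_1,\dots,i_t\}$ attaining the maximum in the definition of $M_{r,\pi}$, so that $\sum_{i\in J}f_{r,\pi}(P_i)\le rk-1$. Let $T_0$ consist of all of $\bigcup_{i\in J}P_i$ together with $B:=rk-1-\sum_{i\in J}f_{r,\pi}(P_i)$ further elements chosen from the parts outside $J$. The number of available such elements is $n-\sum_{i\in J}|P_i|\ge B$ precisely when $M_{r,\pi}\le n$, since $\sum_{i\in J}|P_i|+B=M_{r,\pi}$; the complementary case $M_{r,\pi}\ge n$ gives $T_0=[n]$, an edge-free hypergraph with $\chi=1$, which is exactly what the $\max\{1,\cdot\}$ accounts for. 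For $i\in J$ we have $|T_0\cap P_i|=|P_i|\ge rs_i$, so $\min(|T_0\cap P_i|,rs_i)=f_{r,\pi}(P_i)$, while the $B$ filler elements contribute at most $B$ to the left side; summing gives $\sum_i\min(|T_0\cap P_i|,rs_i)\le\sum_{i\in J}f_{r,\pi}(P_i)+B=rk-1$. By the fact above, $T_0$ contains no $r$ pairwise disjoint vertices, and $|T_0|=\sum_{i\in J}|P_i|+B=M_{r,\pi}$.

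Next I would relabel $[n]$ so that $T_0=\{1,2,\dots,M_{r,\pi}\}$ and partition the tail $\{M_{r,\pi}+1,\dots,n\}$ into consecutive blocks of length $r-1$ (the last possibly shorter), yielding $\lceil(n-M_{r,\pi})/(r-1)\rceil$ blocks. I color a vertex $A$ with a dedicated ``core'' color if $A\subseteq T_0$, and otherwise by the index of the block containing $\max A$ (which lies in the tail, since $A\not\subseteq T_0$). The core class is independent by the previous paragraph. In any block class, $r$ pairwise disjoint vertices would have their maximum elements among the at most $r-1$ entries of a single block, so by pigeonhole two of them would share that maximum, contradicting disjointness; hence every block class is independent as well. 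The total number of colors is $1+\lceil(n-M_{r,\pi})/(r-1)\rceil=\lceil(n-M_{r,\pi})/(r-1)+1\rceil$, which with the degenerate case gives the stated bound.

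The routine but essential points to pin down are the two counting verifications: that an optimal $J$ really yields a set $T_0$ of size exactly $M_{r,\pi}$ satisfying $\sum_i\min(|T_0\cap P_i|,rs_i)\le rk-1$, and that enough filler elements exist precisely when $M_{r,\pi}<n$. I expect the only real friction to be the bookkeeping around $f_{r,\pi}$ (separating the parts with $|P_i|\ge rs_i$ from the rest) and the boundary case $M_{r,\pi}\ge n$; the coloring and its properness are the standard Erd\H{o}s block argument, adapted so that the ``safe'' prefix is the maximum core $T_0$ rather than the interval $\{1,\dots,r(k-1)+1\}$ used for ordinary Kneser hypergraphs.
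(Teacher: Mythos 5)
Your proof is correct and follows essentially the same route as the paper: build a core $T$ of size $M_{r,\pi}$ (the union of an optimal family of parts plus $rk-1-\sum f_{r,\pi}$ filler elements), verify by the same counting argument that it spans no edge, and finish with the standard block coloring of the remaining $n-M_{r,\pi}$ elements. The only cosmetic differences are that the paper draws the filler from a single additional part and colors a vertex by the first block it meets rather than by the block of its maximum element; note also that your parenthetical claim ``$|P_i|\ge rs_i$ for $i\in J$'' need not hold, but the identity $\min(|T_0\cap P_i|,rs_i)=f_{r,\pi}(P_i)$ you derive from it is true in either case, so nothing breaks.
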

\begin{proof}{
Without loss of generality, we can
suppose that $t$ is the greatest positive
integer such that the value of $M_{r,\pi}$ is attained and moreover we
suppose that $M_{r,\pi}$ is obtained by $P_m,P_{m-1},\cdots,P_{m-t+1}$, i.e.,
$M_{r,\pi}=rk-1+\sum_{j=m-t+1}^m (|P_{j}|-f_{r,{\pi}}(P_{j}))$ and
$\sum_{j=m-t+1}^m f_{r,{\pi}}(P_{j})\leq rk-1$. In view of definition of
$M_{r,\pi}$, for any $1\leq i\leq m-t$,
one can see that $rk-\displaystyle\sum_{j=m-t+1}^mf_{r,{\pi}}(P_{j})\leq f_{r,{\pi}}(P_i)$.
If $m-t=0$, then the chromatic number of multiple Kneser
hypergraph is equal to one and there is noting to prove. Hence, suppose $m-t\geq 1$.
Consider $L$ to be a subset of $P_{m-t}$ of size $rk-1-{\displaystyle \sum_{j=m-t+1}^mf_{r,{\pi}}(P_{j})}$.
Set $T=\displaystyle L\cup\left(\bigcup_{j=m-t+1}^m P_j\right)$.
Note that the size of $C=(\displaystyle\bigcup_{j=1}^{m-t}P_j)\setminus L$ is $n-M_{r,\pi}$.
For convenience, we assume $C=\{1,2,\ldots,n-M_{r,\pi}\}$.
Now, we present a proper coloring for ${\rm KG}^r(\pi; \vec{s};k)$
using $\left\lceil\frac{n-M_{r,\pi}}{r-1}\right\rceil+1$ colors.

We show that all the vertices of ${\rm KG}^r(\pi; \vec{s};k)$ which are subsets of $T$
form an independent set.
To see this, suppose therefore (reductio ad absurdum) that this is not the case and
assume that $A_1,A_2,\ldots,A_r\in {\cal V}({\rm KG}^r(\pi; \vec{s};k))$
form an edge in ${\rm KG}^r(\pi; \vec{s};k)$ where $A_1,A_2,\ldots,A_r\subseteq T$.
According to the definition of ${\rm KG}^r(\pi; \vec{s};k)$,
we have $\displaystyle\sum_{i=1}^{r}|A_i\cap P_j|\leq f_{r,{\pi}}(P_j)$
for any $m-t+1\leq j\leq m$ and also $\displaystyle\sum_{i=1}^{r}|A_i\cap L|\leq |L|$.
But
$$\begin{array}{rll}
rk=|\displaystyle\bigcup_{i=1}^{r}A_i| & =    & (\displaystyle\sum_{i=1}^{r}|A_i\cap L|)+\displaystyle\sum_{j=m-t+1}^m\displaystyle\sum_{i=1}^{r}|A_i\cap P_j|\\
             & \leq & |L|+\displaystyle\sum_{j=m-t+1}^mf_{r,{\pi}}(P_j)\\
             & =    & (kr-1)-\displaystyle\sum_{j=m-t+1}^mf_{r,{\pi}}(P_{j})+\displaystyle\displaystyle
             \sum_{j=m-t+1}^mf_{r,{\pi}}(P_j)
\end{array}$$
which is a contradiction.

Note that the size of $C=(\displaystyle\bigcup_{j=1}^{m-t}P_j)\setminus L$ is $n-M_{r,\pi}$. Set $b=\left\lceil{n-M_{r,\pi}\over r-1}\right\rceil$.
Consider a partition $(Q_1,Q_2,\ldots,Q_b)$ of $C$ such that $r-1=|Q_1|=|Q_2|=\cdots=|Q_{b-1}|\geq|Q_b|>0$.

Now, we present a proper coloring for ${\rm KG}^r(\pi; \vec{s};k)$
using $b+1$ colors.
As we mentioned, all the vertices of ${\rm KG}^r(\pi; \vec{s};k)$ that are subsets of $T$
form an independent set and therefore we can assign a color  to all of them, e.g., $b+1$.
Since every other vertex $A$ has a non-empty intersection with $C$,
we define the color of this vertex to be the minimum integer $j$ such that $A\cap Q_j\not=\varnothing$.
}\end{proof}

In the sequel, we show that $alt_2({\cal V}({\rm KG}^2(\pi; \vec{s};k)))=M_{2,{\pi}}-1.$
\begin{lem}\label{alt}
Let $k, m$ and $n$ be positive integers and $\vec{s}=(s_1,s_2,...,s_m)$ be a positive integer vector where $k\leq \displaystyle\sum_{i=1}^ms_i$.
Also, assume that $\pi=(P_1,P_2,...,P_m)$ is a partition of $[n]$, where each $P_i$ is a subset of
$|P_i|$ consecutive numbers.
If $X\in\{+1,0,-1\}^n\setminus\{(0,0,\ldots,0)\}$ and
$alt_I(X)\geq M_{2,{\pi}}$, then either $X^{+1}$ or $X^{-1}$
contains a $k$-subset $A$ of $[n]$ such that
$A$ is a vertex of ${\rm KG}^2(\pi; \vec{s};k)$.
\end{lem}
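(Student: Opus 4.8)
The plan is to reduce the statement to a single counting inequality and then exploit the fact that each $P_i$ is a block of consecutive integers. Recall that $X^{+1}$ contains a vertex of ${\rm KG}^2(\pi;\vec{s};k)$ precisely when it contains a valid $k$-set, and by a greedy argument this happens if and only if $\sum_{i=1}^m \min(|X^{+1}\cap P_i|,s_i)\geq k$ (take up to $s_i$ of the $+1$-entries inside each $P_i$, which are disjoint across parts, and trim to size $k$); the analogous statement holds for $X^{-1}$. Writing $p_i=|X^{+1}\cap P_i|$ and $q_i=|X^{-1}\cap P_i|$, it therefore suffices to show that $\max\{\sum_i\min(p_i,s_i),\ \sum_i\min(q_i,s_i)\}\geq k$.

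First I would fix a longest alternating subsequence of $X$, of length $L=alt_I(X)\geq M_{2,\pi}$. Since each $P_i$ is a block of consecutive integers and the chosen indices increase, the subsequence meets the parts in contiguous stretches; let $c_i$ be the number of its terms lying in $P_i$, so that $\sum_i c_i=L$. Each such stretch is itself alternating, hence contains $u_i=\lceil c_i/2\rceil$ entries of one sign and $v_i=\lfloor c_i/2\rfloor$ of the other (with one of the two global assignments of which sign is which). In particular $p_i\geq u_i$ and $q_i\geq v_i$, so it is enough to bound $A:=\sum_i\min(u_i,s_i)$ and $B:=\sum_i\min(v_i,s_i)$ below and prove $\max\{A,B\}\geq k$.

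The key elementary identity is $\min(u_i,s_i)+\min(v_i,s_i)=\min(c_i,2s_i)$: when $c_i\leq 2s_i$ both $u_i,v_i\leq s_i$ and the left side equals $u_i+v_i=c_i$, while when $c_i>2s_i$ both $u_i,v_i\geq s_i$ and the left side equals $2s_i$. Summing yields $A+B=\sum_i\min(c_i,2s_i)=L-\sum_i\max\{c_i-2s_i,0\}$. Now set $S=\{i:\ c_i>2s_i\}$; for such $i$ one has $|P_i|\geq c_i>2s_i$, so $f_{2,\pi}(P_i)=2s_i$ and $\max\{c_i-2s_i,0\}\leq |P_i|-f_{2,\pi}(P_i)$. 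I would then split into two cases according to whether $S$ is admissible in the optimisation defining $M_{2,\pi}$. If $\sum_{i\in S}f_{2,\pi}(P_i)=\sum_{i\in S}2s_i\leq 2k-1$, then $S$ is a feasible index set, so $M_{2,\pi}\geq 2k-1+\sum_{i\in S}(|P_i|-f_{2,\pi}(P_i))$, whence $A+B=L-\sum_{i\in S}(c_i-2s_i)\geq M_{2,\pi}-\sum_{i\in S}(|P_i|-2s_i)\geq 2k-1$, and $\max\{A,B\}\geq\lceil(A+B)/2\rceil\geq k$. If instead $\sum_{i\in S}2s_i\geq 2k$, then for every $i\in S$ we have $\min(u_i,v_i)=\lfloor c_i/2\rfloor\geq s_i$, so $\min(u_i,s_i)=s_i$ and hence $A\geq\sum_{i\in S}s_i\geq k$ directly. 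In either case a valid $k$-set exists inside $X^{+1}$ or $X^{-1}$.

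I expect the delicate points to be the bookkeeping that justifies $A+B=L-\sum_i\max\{c_i-2s_i,0\}$ and, above all, the recognition that the set $S$ of \emph{overflowing} intervals is exactly the object fed into the definition of $M_{2,\pi}$; making the two cases (admissible versus over-capacity $S$) line up with the maximum in $M_{2,\pi}$ is the crux. By contrast, the interval decomposition of the alternating subsequence and the per-part sign count are routine once the consecutiveness of the $P_i$ is invoked.
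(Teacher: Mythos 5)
Your proposal is correct and follows essentially the same route as the paper: decompose a longest alternating subsequence over the consecutive blocks $P_i$, use that the two sign-counts within each block differ by at most one so that $\min(u_i,s_i)+\min(v_i,s_i)=\min(c_i,2s_i)$, and play the set of overflowing blocks off against the maximization defining $M_{2,\pi}$. The only differences are cosmetic — you argue directly (which forces your second case, $\sum_{i\in S}2s_i\geq 2k$) whereas the paper argues by contradiction from $k^{+},k^{-}\leq k-1$, where feasibility of the overflowing set is automatic.
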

\begin{proof}{
Suppose that $X\in\{+1,0,-1\}^n\setminus\{(0,0,\ldots,0)\}$  and $alt_I(X)\geq M_{2,{\pi}}$.
Consider an alternating subsequence of non-zero signs in
$X$ of length $alt_I(X)$. Define $alt_I(X,P_j)$ to be the length of that
part of this alternating subsequence of $X$
lied in $P_j$. Also, let  $alt^+(X,P_j)$ (resp. $alt^-(X,P_j)$) be the
number of positive (resp. negative) sings of
this alternating subsequence of $X$ lied in $P_j$.
One can see that $alt_I(X,P_j)=alt^+(X,P_j)+alt^-(X,P_j)$ and $|alt^+(X,P_j)-alt^-(X,P_j)|\leq 1$.
Assume that $k^+=\displaystyle\sum_{j=1}^m \min\{s_j,\ alt^+(X,P_j)\}$
and $k^-=\displaystyle\sum_{j=1}^m \min\{s_j,\ alt^-(X,P_j)\}$.
To prove the lemma, it is enough to show that either $k^+\geq k$ or $k^-\geq k$.
Suppose therefore (reductio ad absurdum) that this is not the case.
So
$$I(X):=\displaystyle\sum_{j=1}^m \min\{s_j,\ alt^+(X,P_j)\}+
\displaystyle\sum_{j=1}^m \min\{s_j,\ alt^-(X,P_j)\}\leq 2k-2.$$
Since $alt_I(X,P_j)=alt^+(X,P_j)+alt^-(X,P_j)$ and $|alt^+(X,P_j)-alt^-(X,P_j)|\leq 1$,
one can see that
$$\min\{s_j,\ alt^+(X,P_j)\}+\min\{s_j,\ alt^-(X,P_j)\}\geq \min\{f_{2,{\pi}}(P_j),\ alt_I(X,P_j)\}.$$
Therefore,
$$\begin{array}{rll}
  2k-2 & \geq & \displaystyle\sum_{j=1}^m \min\{f_{2,{\pi}}(P_j),\ alt_I(X,P_j)\}\\
       &  =   & \displaystyle\sum_{\{j:\ alt_I(X,P_j)\geq f_{2,{\pi}}(P_j)\}}f_{2,{\pi}}(P_j)+\sum_{\{j:\ alt_I(X,P_j)< f_{2,{\pi}}(P_j)\}}alt_I(X,P_j).
\end{array}$$
This means that $\displaystyle\sum_{\{j:\ alt_I(X,P_j)\geq f_{2,{\pi}}(P_j)\}}f_{2,{\pi}}(P_j)\leq 2k-2$
and according to the definition of $M_{2,{\pi}}$ we have
$$2k-1+\displaystyle\sum_{\{j:\ alt_I(X,P_j)\geq f_{2,{\pi}}(P_j)\}}
\left(|P_j|-f_{2,{\pi}}(P_j)\right)\leq M_{2,{\pi}}$$
and therefore
$$
2k-1-M_{2,{\pi}}+\displaystyle\sum_{\{j:\ alt_I(X,P_j)\geq f_{2,{\pi}}(P_j)\}}|P_j|
\leq
\displaystyle\sum_{\{j:\ alt_I(X,P_j)\geq f_{2,{\pi}}(P_j)\}}
f_{2,{\pi}}(P_j).$$
Now, we have
$$2k-1-M_{2,{\pi}}+\displaystyle\sum_{\{j:\ alt_I(X,P_j)\geq f_{2,{\pi}}(P_j)\}}
|P_j|+\displaystyle\sum_{\{j:\ alt_I(X,P_j)< f_{2,{\pi}}(P_j)\}}alt_I(X,P_j)\leq I(X).$$
On the other hand, $I(X)\leq 2k-2$ and so
$$ 1+alt_I(X)\leq \displaystyle1+\sum_{\{j:\ alt_I(X,P_j)\geq
f_{2,{\pi}}(P_j)\}}|P_j|+\displaystyle\sum_{\{j:\ alt_I(X,P_j)< f_{2,{\pi}}(P_j)\}}alt_I(X,P_j)
\leq M_{2,{\pi}},$$
which is a contradiction.
}\end{proof}

\begin{thm}\label{main}
Let $k, n,$ and $m$ be positive integers where $k\geq 1$. Also,
assume that $\pi=(P_1,P_2,...,P_m)$ is a
partition of $[n]$ and $\vec{s}=(s_1,s_2,...,s_m)$ is a positive integer vector where $k\leq \displaystyle\sum_{i=1}^ms_i$. We have
$$\chi({\rm KG}^2(\pi;\vec{s};k))= \max\{1,n-M_{2,{\pi}}+1\}.$$
\end{thm}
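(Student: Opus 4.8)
The plan is to sandwich $\chi({\rm KG}^2(\pi;\vec{s};k))$ between matching upper and lower bounds. The upper bound is immediate: specializing Lemma~\ref{upper} to the case $r=2$ gives $\chi({\rm KG}^2(\pi;\vec{s};k)) \leq \max\{1, \lceil (n - M_{2,\pi})/(2-1) + 1\rceil\} = \max\{1, n - M_{2,\pi}+1\}$, since $n - M_{2,\pi}+1$ is already an integer. So only the reverse inequality remains to be proved.

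For the lower bound I would observe that ${\rm KG}^2(\pi;\vec{s};k)$ is exactly the Kneser graph ${\rm KG}^2({\cal F})$ of the hypergraph ${\cal F} = {\cal V}({\rm KG}^2(\pi;\vec{s};k)) \subseteq 2^{[n]}$ whose edges are the admissible $k$-sets. Theorem~\ref{dolthm} with $r=2$ then yields $\chi({\rm KG}^2(\pi;\vec{s};k)) \geq n - alt_2({\cal F})$, so the whole task reduces to establishing the single estimate $alt_2({\cal F}) \leq M_{2,\pi}-1$ on the first alternation number.

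To bound $alt_2({\cal F})$ I would first reduce to the hypothesis of Lemma~\ref{alt}. Both the chromatic number of ${\rm KG}^2(\pi;\vec{s};k)$ and the quantity $M_{2,\pi}$ depend only on the isomorphism type of the partition, i.e. on the multiset of pairs $(|P_i|, s_i)$, and not on which elements of $[n]$ lie in each $P_i$; hence I may relabel $[n]$ so that every part $P_i$ consists of $|P_i|$ consecutive integers. Under this relabeling the hypergraph is isomorphic, so neither side of the claimed identity changes, and $M_{2,\pi}$ is unaffected. Now Lemma~\ref{alt} applies to the identity ordering $I$: its contrapositive states that if neither $X^{+1}$ nor $X^{-1}$ contains a vertex of ${\rm KG}^2(\pi;\vec{s};k)$ (that is, a member of ${\cal F}$), then $alt_I(X) \leq M_{2,\pi}-1$. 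By the definition of the first alternation number this says precisely $alt_{2,I}({\cal F}) \leq M_{2,\pi}-1$, whence $alt_2({\cal F}) = \min_\sigma alt_{2,\sigma}({\cal F}) \leq alt_{2,I}({\cal F}) \leq M_{2,\pi}-1$.

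Combining the two bounds gives $n - M_{2,\pi}+1 \leq \chi({\rm KG}^2(\pi;\vec{s};k)) \leq \max\{1, n - M_{2,\pi}+1\}$, and together with the trivial bound $\chi \geq 1$, valid because $k \leq \sum_i s_i$ guarantees at least one vertex, this forces $\chi({\rm KG}^2(\pi;\vec{s};k)) = \max\{1, n - M_{2,\pi}+1\}$. The step requiring the most care is the reduction to consecutive parts, together with checking that Lemma~\ref{alt} delivers exactly the inequality $alt_{2,I}({\cal F}) \leq M_{2,\pi}-1$ rather than something weaker; I would also verify that the degenerate regime $n \leq M_{2,\pi}-1$, in which every admissible $k$-set already fits inside an independent ``heavy'' block and the chromatic number collapses to $1$, is consistent with the $\max\{1,\cdot\}$ appearing on both sides.
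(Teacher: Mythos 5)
Your proposal is correct and follows essentially the same route as the paper: the upper bound from Lemma~\ref{upper} with $r=2$, a relabeling so that each part consists of consecutive integers, the contrapositive of Lemma~\ref{alt} to get $alt_{2,I}({\cal F})\leq M_{2,\pi}-1$ for ${\cal F}={\cal V}({\rm KG}^2(\pi;\vec{s};k))$, and then the alternation-number lower bound (Theorem~\ref{dolthm}, equivalently Lemma~\ref{gendol} with $i=1$) to conclude $\chi\geq n-M_{2,\pi}+1$, with the degenerate case $M_{2,\pi}\geq n$ handled separately. Your explicit justification that relabeling preserves both sides of the identity, and your reading of Lemma~\ref{alt} in contrapositive form, match the paper's intent (the paper's line ``$alt_I({\cal F})\geq M_{2,\pi}-1$'' is evidently a typo for the inequality you state).
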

\begin{proof}{
To prove this theorem, according to Lemma \ref{upper},  it is enough to show that
$\chi({\rm KG}^2(\pi;\vec{s};k))\geq n-(M_{2,{\pi}}-1)$. Define $p_i=|P_i|$, $q_0=0$, and $q_i=p_1+\cdots+p_i$.
Without loss of generality, we can suppose that for any $1\leq i\leq m$, $P_i=\{q_{i-1}+1,\ldots,q_{i}\}$.
If $M_{2,{\pi}}\geq n$, then according to Lemma \ref{upper}, the assertion holds.
Therefor, we assume that $M_{2,\pi}< n$.
Set ${\cal F}={\cal V}({\rm KG}^2(\pi;\vec{s};k))$.
In view of Lemma \ref{alt}, we have $alt_I({\cal F})\geq M_{2,{\pi}}-1$.
Consequently, by Theorem \ref{gendol},
$$\chi({\rm KG}^2(\pi;\vec{s};k))\geq n-alt_{2,I}({\cal F})\geq n-(M_{2,{\pi}}-1).$$
}\end{proof}
Note that Theorem \ref{main} provide a generalization of Lov\'asz-Kneser  Theorem \cite{MR514625}.
In fact, if we set $|P_1|=|P_2|=\cdots|P_m|=1$ and $s_1=s_2=\ldots=s_m=1$ ($\vec{s}=(1,1\ldots,1)$), then
${\rm KG}^2(\pi;\vec{s};k)={\rm KG}^2(m,k)$.

In \cite{MR1701284}, Tardif introduced the graph $K^{k,m}_t$ and called it the \emph{fractional multiple}
of the complete graph $K_t$.
This graph can be represented as follows.
The vertices of $K^{k,m}_t$ represent independent sets of size $k$ in a disjoint
union of $m$ copies of $K_t$, and
two of these are joined by an edge in $K^{k,m}_t$ if they are disjoint.
In \cite{MR2707421,MR2202051}, the chromatic number of $K^{k,m}_t$
was determined provided that $t$ is even.
It was shown that $\chi(K^{k,m}_t)=t(m-k+1)$ where $t$ is an even integer and $k\leq m$.
Although, the chromatic number of $K^{k,m}_t$ for any odd integer $t\geq 3$
was remained as an open problem. Moreover, it was conjectured in \cite{MR2707421} that $\chi(K^{k,m}_t)=t(m-k+1)$ where $t\geq 3$
is odd and $k\leq m$. In \cite{MR2202051}, it has been shown to prove $\chi(K^{k,m}_t)=t(m-k+1)$,
it suffices to show that $\chi(K^{k,m}_3)=3(m-k+1)$, since $K^{k,m}_{2t+3}$
contains a complete join of $K^{k,m}_{2t}$ and $K^{k,m}_{3}$.

Note that if we set $|P_1|=|P_2|=\cdots|P_m|=t$ and $s_1=s_2=\ldots=s_m=1$ ($\vec{s}=(1,1\ldots,1)$), then
${\rm KG}^2(\pi;\vec{s};k)=K^{k,m}_t$.
Therefore, in view of Theorem \ref{main}, we have the next corollary which
gives an affirmative answer to the aforementioned conjecture \cite{MR2707421}.
\begin{cor}
Let $t$, $k$ and $m$ be positive integers where $k\leq m$ and $t\geq 2$. Then $\chi(K^{k,m}_t)=t(m-k+1)$.
\end{cor}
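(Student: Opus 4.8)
The plan is to recognize $K^{k,m}_t$ as a special instance of the multiple Kneser hypergraph and then invoke Theorem~\ref{main}. As noted immediately before the statement, setting $|P_1|=\cdots=|P_m|=t$ and $\vec{s}=(1,1,\ldots,1)$ yields an isomorphism ${\rm KG}^2(\pi;\vec{s};k)=K^{k,m}_t$, where $\pi=(P_1,\ldots,P_m)$ is the partition of $[n]$ with $n=mt$ into $m$ blocks of size $t$. Thus it suffices to evaluate the quantity $M_{2,\pi}$ occurring in Theorem~\ref{main} for these parameters and to substitute into the formula $\chi({\rm KG}^2(\pi;\vec{s};k))=\max\{1,n-M_{2,\pi}+1\}$.

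First I would compute $f_{2,\pi}(P_i)$. Since $r=2$, $s_i=1$, and $|P_i|=t\geq 2=rs_i$, the first case in the definition of $f_{r,\pi}$ applies, so $f_{2,\pi}(P_i)=rs_i=2$ for every $i$; consequently $|P_i|-f_{2,\pi}(P_i)=t-2$ for every $i$. Next I would evaluate $M_{2,\pi}$. Writing $\ell$ for the number of blocks selected (to avoid the notation clash with the block size $t$), the admissible collections $P_{i_1},\ldots,P_{i_\ell}$ are precisely those with $\sum_j f_{2,\pi}(P_{i_j})=2\ell\leq 2k-1$, i.e.\ $\ell\leq k-1$. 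Because each selected block contributes the same nonnegative amount $t-2$ to the objective $2k-1+\sum_j(|P_{i_j}|-f_{2,\pi}(P_{i_j}))$, the maximum is attained by taking as many blocks as possible, namely $\ell=k-1$, which is available since $k\leq m$. This gives $M_{2,\pi}=(2k-1)+(k-1)(t-2)=(k-1)t+1$.

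Finally, substituting $n=mt$ and $M_{2,\pi}=(k-1)t+1$ produces $n-M_{2,\pi}+1=mt-(k-1)t=t(m-k+1)$, and since $k\leq m$ and $t\geq 2$ this value is at least $t\geq 2>1$, so the maximum with $1$ is redundant. Hence $\chi(K^{k,m}_t)=t(m-k+1)$. There is no real obstacle here: the substance is entirely supplied by Theorem~\ref{main}, and the only point needing care is verifying that the maximum defining $M_{2,\pi}$ is realized by choosing exactly $k-1$ blocks. This rests on the two elementary observations that the constraint $2\ell\leq 2k-1$ forces $\ell\leq k-1$ and that the per-block contribution $t-2$ is nonnegative precisely because $t\geq 2$; both fail for $t=1$, consistent with that case reducing instead to the ordinary Kneser graph.
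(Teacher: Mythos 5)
Your proposal is correct and follows exactly the route the paper intends: identify $K^{k,m}_t$ with ${\rm KG}^2(\pi;\vec{s};k)$ for $m$ blocks of size $t$ and $\vec{s}=(1,\ldots,1)$, then apply Theorem~\ref{main}. The paper leaves the evaluation of $M_{2,\pi}$ implicit, and your computation $M_{2,\pi}=(k-1)t+1$ (via $f_{2,\pi}(P_i)=2$, the parity observation that $2\ell\leq 2k-1$ forces $\ell\leq k-1$, and $t-2\geq 0$) correctly fills in that detail.
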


Alon et al. \cite{MR857448} determined the chromatic number of Kneser hypergraphs, i.e.,
$\chi({\rm KG}^r(n,k))=\left\lceil {n-r(k-1) \over r-1}\right\rceil$.
Here, we introduce a generalization of this result.
\begin{thm}
Let $k, r, n,$ and $m$ be positive integers where $r\geq 2$ and $k\geq 1$. Also,
assume that $\pi=(P_1,P_2,...,P_m)$ is a
partition of $[n]$ and $\vec{s}=(s_1,s_2,...,s_m)$ is a positive integer vector, where for each $i\in [m]$,
$|P_i|\leq 2s_i$  and that $k\leq \displaystyle\sum_{i=1}^ms_i$. We have
$$\chi({\rm KG}^r(\pi;\vec{s};k))=\left\lceil{n-r(k-1)\over r-1}\right\rceil.$$
\end{thm}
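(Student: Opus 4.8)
The plan is to establish the theorem by proving the lower and upper bounds separately, following the pattern set by Theorem~\ref{main} but now working with general $r$ rather than $r=2$. The target value $\left\lceil{n-r(k-1)\over r-1}\right\rceil$ should be recognized as $\left\lceil{n-M_{r,\pi}\over r-1}+1\right\rceil$ for the special partitions under consideration, so the first step is a purely arithmetic reduction: under the hypothesis $|P_i|\le 2s_i$ for all $i$, I would compute $M_{r,\pi}$ explicitly. Since $|P_i|\le 2s_i$ forces $|P_i|<rs_i$ whenever $r\ge 3$ (and equals the boundary case when $r=2$), the function $f_{r,\pi}(P_i)$ will typically collapse to $|P_i|$, which makes the correction terms $|P_{i_j}|-f_{r,\pi}(P_{i_j})$ vanish and yields $M_{r,\pi}=r(k-1)+1$ (or $M_{r,\pi}=rk-1$, which I would reconcile carefully against the ceiling). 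Plugging this into Lemma~\ref{upper} immediately gives the upper bound $\chi({\rm KG}^r(\pi;\vec{s};k))\le\left\lceil{n-r(k-1)\over r-1}\right\rceil$.

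For the lower bound I would invoke Theorem~\ref{dolthm}, which states $\chi({\rm KG}^r({\cal F}))\ge\frac{n-alt_r({\cal F})}{r-1}$ for the vertex hypergraph ${\cal F}={\cal V}({\rm KG}^r(\pi;\vec{s};k))$. The crux is therefore to bound the first alternation number, and I would aim to show $alt_r({\cal F})\le r(k-1)$, mirroring the computation $alt_{r,I}({[n]\choose k}_2)=r(k-1)+1$ mentioned after Theorem~\ref{dolthm}. Concretely, for an arbitrary ordering $\sigma$ I want to show that any $X\in({\cal Z}_r\cup\{0\})^n$ with $alt_\sigma(X)\ge r(k-1)+1$ must have some part $X^j$ containing a vertex of the multiple Kneser hypergraph. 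The natural device is a pigeonhole argument on a longest alternating subsequence: distributing more than $r(k-1)$ alternating signs among the $r$ colour classes forces one class, call it $X^j$, to receive at least $k$ of the alternating entries; because $|P_i|\le 2s_i$, the alternating entries falling inside each block $P_i$ automatically satisfy the stability-type cap $|X^j\cap P_i|\le s_i$ once one accounts for how alternation limits consecutive same-sign entries within a block, so that the collected entries form a legitimate vertex $A\subseteq X^j$ with $|A|=k$ and $|A\cap P_i|\le s_i$.

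The main obstacle I anticipate is precisely this last verification, that the $k$ entries extracted from a single colour class respect all the local caps $|A\cap P_i|\le s_i$ simultaneously. In Lemma~\ref{alt} the analogous fact was handled for $r=2$ via the quantities $alt^{\pm}(X,P_j)$ and the observation $|alt^+(X,P_j)-alt^-(X,P_j)|\le 1$; for general $r$ I would need a replacement for this parity-type control, showing that within each block $P_i$ a single sign $\omega_j$ can appear at most $\lceil alt_\sigma(X,P_i)/\cdots\rceil$ times in an alternating subsequence, and then using $|P_i|\le 2s_i$ to guarantee that this count never exceeds $s_i$. Assembling the per-block bounds into the global count of $k$ requires care about how the optimal alternating subsequence interleaves across blocks, and this bookkeeping — rather than any deep topological input — is where the real work lies. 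Once $alt_r({\cal F})\le r(k-1)$ is secured, Theorem~\ref{dolthm} delivers $\chi\ge\frac{n-r(k-1)}{r-1}$, and since the chromatic number is an integer this rounds up to the ceiling, matching the upper bound and completing the proof.
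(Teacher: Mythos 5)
Your proposal follows essentially the same route as the paper: upper bound from Lemma~\ref{upper} after computing $M_{r,\pi}$, lower bound from Theorem~\ref{dolthm} by showing $alt_r({\cal F})\leq r(k-1)$ via a pigeonhole on a longest alternating subsequence, using $|P_i|\leq 2s_i$ to make the per-block caps automatic. Two points need fixing. First, the arithmetic: since $|P_i|\leq 2s_i\leq rs_i$ forces $f_{r,\pi}(P_i)=|P_i|$ for every $i$, all correction terms vanish and $M_{r,\pi}=rk-1$ exactly (not $r(k-1)+1$); then $\lceil\frac{n-(rk-1)}{r-1}+1\rceil=\lceil\frac{n-r(k-1)}{r-1}\rceil$, as required. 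Second, and more substantively, the statement you propose to prove --- that for an \emph{arbitrary} ordering $\sigma$, every $X$ with $alt_\sigma(X)\geq r(k-1)+1$ has some $X^j$ containing a vertex --- is false. Take $r=2$, $k=2$, $P_1=\{1,3\}$, $P_2=\{2,4\}$, $s_1=s_2=1$, and $X=(+,-,+,-)$ in the identity order: then $alt(X)=4\geq 3$, but $X^+=P_1$ and $X^-=P_2$, and neither contains a set meeting both caps. The mechanism you describe (alternation forbidding consecutive same-sign entries \emph{within a block}, so each sign occurs at most $\lceil|P_i|/2\rceil\leq s_i$ times there) only works when each $P_i$ is an interval of the ordering. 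This is not a fatal flaw, because $alt_r({\cal F})$ is a minimum over orderings, so one good $\sigma$ suffices; the paper relabels so that $P_i=\{q_{i-1}+1,\ldots,q_i\}$ and argues only for the identity permutation. With that restriction your pigeonhole is exactly the paper's argument: if no sign class collects $k$ capped entries then $alt_I(X)=\sum_{i,j}alt(X,P_j,\omega_i)=\sum_{i,j}\min\{s_j,alt(X,P_j,\omega_i)\}\leq r(k-1)$, a contradiction.
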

\begin{proof}{
Set $p_i=|P_i|$, $q_0=0$, and $q_i=p_1+\cdots+p_i$.
Without loss of generality, we can suppose that for any $1\leq i\leq m$, $P_i=\{q_{i-1}+1,\ldots,q_{i}\}$.
One can check that $M_{r,\pi}=rk-1$.
Set ${\cal F}={\cal V}({\rm KG}^r(\pi;\vec{s};k))$.
In view of Theorem \ref{dolthm} and Lemma \ref{upper},
the proof is completed by showing that $alt_{r,I}({\cal F})\leq M_{r,\pi}-r+1=r(k-1)$. On the contrary,
let $X=(X^1,X^2,\ldots,X^r)\in({\cal Z}_r\cup\{0\})^n\setminus\{(0,0,\ldots,0)\}$
such that $alt_I(X)\geq r(k-1)+1$ and none of $X^i$'s contain any vertex of $KG^r(\pi;\vec{s};k)$.
Consider an alternating subsequence of non-zero terms of
$X$ of length at least $r(k-1)+1$. Define $alt_I(X,P_j)$ to be the length of that
part of this alternating subsequence
lied in $P_j$. Also, for each $\omega_i\in {\cal Z}_r$, let  $alt(X,P_j,\omega_i)$ be the
number of $\omega_i$'s of this alternating subsequence lied in $P_j$.
One can see that $alt_I(X,P_j)=\displaystyle\sum_{i=1}^{r}alt(X,P_j,\omega_i)$.
We can establish the theorem, if we prove there exists an $\omega_i\in {\cal Z}_r$ such that
$$\displaystyle\sum_{j=1}^{m}\min\left\{s_j,alt(X,P_j,\omega_i)\right\}\geq k.$$
Suppose therefore (reductio ad absurdum) that this is not the case.
Hence,
$$\displaystyle\sum_{i=1}^{r}\sum_{j=1}^{m}\min\left\{s_j,alt(X,P_j,\omega_i)\right\}\leq r(k-1).$$
Note that for any $j\in[m]$, we know $|P_j|\leq 2s_j$ and so this
implies that, for every $j\in[m]$ and $\omega_i\in {\cal Z}_r$,
we have $\min\left\{s_j,alt(X,P_j,\omega_i)\right\}=alt(X,P_j,\omega_i)$.
Consequently,
$$alt_I(X)=\displaystyle\sum_{j=1}^{m}\sum_{i=1}^{r}\min\left\{s_j,alt(X,P_j,\omega_i)\right\}\leq r(k-1),$$
which is a contradiction.
}\end{proof}
\section{Stable Kneser Graphs}
In this section, we investigate the chromatic number of $s$-stable Kneser
hypergraphs and almost $s$-stable Kneser hypergraphs.
Next proposition was proved in \cite{MR2793613} and  here we present another proof for this result.
\begin{pro}\label{upperconj}
Let $k, n, r$, and $s$ be non-negative integers where $n\geq sk$ and $s\geq r\geq 2$. We have
$$\chi({\rm KG}^r{(n, k)}_{s-stab})\leq\left\lceil{n-s(k-1)\over r-1}\right\rceil.$$
\end{pro}
\begin{proof}{
Assume $n=sq+t$ where $0< t \leq s$. Set $P_{i+1}=\{is+1,is+2,\ldots,(i+1)s\}$ for $0\leq i\leq q-1$
and $P_{q+1}=\{sq+1,sq+2,\ldots,n\}$.
Also, define $\pi=(P_1,P_2,...,P_{q+1})$ and $\vec{s}=(1,1,\ldots,1)$.
By Lemma \ref{upper}, we have
$$\chi({\rm KG}^r(\pi;\vec{s};k))\leq\left\lceil{n-M_{r,\pi}\over r-1}+1\right\rceil.$$
One can check that $M_{r,\pi}=(k-1)s+r-1$. Therefore, since
${\rm KG}^r{(n, k)}_{s-stab}$ is a subgraph of ${\rm KG}^r(\pi,\vec{s},k)$, we have
$$\chi({\rm KG}^r{(n, k)}_{s-stab})\leq\left\lceil{n-s(k-1)\over r-1}\right\rceil.$$
}\end{proof}
\begin{lem}\label{prstable}
Let $r, s$ and $p$ be positive integers where $r\geq s\geq 2$ and $p$ is a prime number.
Assume that for any $n\geq rk$, $\chi({\rm KG}^r(n,k)_{s-stab})=\left\lceil{n-r(k-1)\over r-1}\right\rceil$.
For any $n\geq prk$, we have $\chi({\rm KG}^{pr}(n,k)_{s-stab})=\left\lceil{n-pr(k-1)\over pr-1}\right\rceil$.
\end{lem}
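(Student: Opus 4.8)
{
The plan is to prove the two inequalities separately: the upper bound is immediate, and the lower bound carries all of the work. For the upper bound, observe that ${\rm KG}^{pr}(n,k)_{s-stab}$ is an induced sub-hypergraph of the ordinary Kneser hypergraph ${\rm KG}^{pr}(n,k)$ --- its vertices form a subset of ${[n]\choose k}$ and every one of its edges is an edge of ${\rm KG}^{pr}(n,k)$. Hence any proper coloring of ${\rm KG}^{pr}(n,k)$ restricts to a proper coloring, and the theorem of Alon et al. \cite{MR857448} gives $\chi({\rm KG}^{pr}(n,k)_{s-stab})\leq\chi({\rm KG}^{pr}(n,k))=\left\lceil{n-pr(k-1)\over pr-1}\right\rceil$. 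Writing $C=\chi({\rm KG}^{pr}(n,k)_{s-stab})$, it remains to show $n\leq (pr-1)C+pr(k-1)$, which gives the matching lower bound.

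My approach is to imitate the multiplicativity argument of Lemma \ref{dolrs}, factoring $pr=p\cdot r$ and exploiting that $p$ is prime, so that the $Z_p$-based bound of Theorem \ref{dolthm} governs the outer factor, while the \emph{exact} $r$-uniform value supplied by the hypothesis governs the inner factor. Suppose for contradiction that $n> (pr-1)C+pr(k-1)$. Fix a proper coloring $h$ of ${\rm KG}^{pr}(n,k)_{s-stab}$ with colors $[C]$, set ${\cal F}={[n]\choose k}_s$, and introduce the auxiliary hypergraph $${\cal T}=\left\{N\subseteq [n]:\ \chi\left({\rm KG}^r({\cal F}_{|_N})\right)>C\right\}.$$ For each $N\in{\cal T}$ the restriction of $h$ fails to properly color ${\rm KG}^r({\cal F}_{|_N})$, so $N$ contains $r$ pairwise disjoint $s$-stable $k$-subsets of one color; define $h'\colon{\cal T}\to[C]$ by letting $h'(N)$ be the largest such common color. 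The endgame is then exactly as in Lemma \ref{dolrs}: once I establish $\chi({\rm KG}^p({\cal T}))>C$, the map $h'$ is not a proper coloring of ${\rm KG}^p({\cal T})$, so there are $p$ pairwise disjoint $N_1,\dots,N_p\in{\cal T}$ with $h'(N_1)=\cdots=h'(N_p)$; expanding each $N_j$ into its monochromatic $r$-tuple yields $pr$ pairwise disjoint $s$-stable $k$-subsets of one color, i.e. a monochromatic edge of ${\rm KG}^{pr}(n,k)_{s-stab}$, contradicting properness of $h$.

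It thus suffices to prove $\chi({\rm KG}^p({\cal T}))>C$, which by Theorem \ref{dolthm} for the prime $p$ follows once $alt_p({\cal T})<n-(p-1)C$. I would argue this by contradiction, mirroring the Claim of Lemma \ref{dolrs}: a witnessing $X=(X^1,\dots,X^p)$ with $alt_I(X)\geq n-(p-1)C$ and no part in ${\cal T}$ would satisfy $\chi({\rm KG}^r({\cal F}_{|_{X^j}}))\leq C$ for every $j$. Feeding in the hypothesis should turn this into a per-part ``effective size'' bound $\mathrm{eff}(X^j)\leq (r-1)C+r(k-1)$ coming from $\big\lceil(\mathrm{eff}(X^j)-r(k-1))/(r-1)\big\rceil\leq C$; splitting each $X^j$ into $r$ disjoint ${\cal F}$-free blocks and applying the alternation-combination estimate of Lemma \ref{altrs} would then give $alt_I(X)\leq\sum_{j=1}^p\mathrm{eff}(X^j)\leq p(r-1)C+pr(k-1)$. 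Combined with $alt_I(X)\geq n-(p-1)C$ this forces $n\leq (p-1)C+p(r-1)C+pr(k-1)=(pr-1)C+pr(k-1)$, using $pr-1=(p-1)+p(r-1)$, contrary to the standing assumption. This identity is exactly the arithmetic that makes the two factors close up, and it shows why a prime $p$ (hence $Z_p$-Tucker) is the right tool for the outer step.

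The step I expect to be the main obstacle is the passage from $\chi({\rm KG}^r({\cal F}_{|_{X^j}}))\leq C$ to the \emph{sharp} per-part estimate, since ${\cal F}_{|_{X^j}}$ is merely the family of $s$-stable $k$-subsets contained in $X^j$ and is not itself a stable Kneser hypergraph on a shorter cycle, so the hypothesis does not apply verbatim. Overcoming this requires relating ${\cal F}_{|_{X^j}}$ to a genuine $s$-stable Kneser hypergraph whose cyclic ground set has size the relevant alternation count of $X^j$ --- an accounting in the spirit of Lemma \ref{alt} --- so that the exact value of the hypothesis, rather than the off-by-one estimate of Theorem \ref{dolthm} alone, is what is invoked. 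It is precisely this extra sharpness, the same ``$+1$'' that separates Lemma \ref{gendol} with $i=2$ from the plain Dol'nikov--K{\v{r}}{\'{\i}}{\v{z}} bound, that must survive the multiplication by $p$; propagating it faithfully while carrying out the reduction to the identity permutation is the delicate point of the argument.
}
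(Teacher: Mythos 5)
Your proposal is correct in outline but takes a genuinely different route from the paper. The paper does not pass through Theorem~\ref{dolthm} or the multiplicativity scheme of Lemma~\ref{dolrs}: it runs one direct $Z_p$-Tucker argument, labelling a signed set $X$ of small alternation by $(\omega^j,alt(X))$ and a signed set of large alternation by the largest color occurring on $r$ pairwise disjoint stable $k$-sets inside some part $X^i$ (such a part exists because some $|X^i|>(r-1)(L-1)+r(k-1)$, where $L$ is the target value), and the numerical output of $Z_p$-Tucker is exactly your identity $pr-1=(p-1)+p(r-1)$. Your version instead delegates the outer prime step to Theorem~\ref{dolthm} applied to the auxiliary hypergraph ${\cal T}$ and reuses the endgame of Lemma~\ref{dolrs} verbatim; this buys modularity (the topological input is invoked once, as a black box) at the cost of introducing ${\cal T}$ and bounding its alternation number. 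The ``main obstacle'' you flag is real but is closed by a one-line observation that the paper itself states and uses: the order-preserving bijection $[|A|]\to A$ sends $s$-stable subsets of $[|A|]$ to $s$-stable subsets of $[n]$ contained in $A$ (gaps can only grow, on both arcs of the cycle), so ${\rm KG}^r(|A|,k)_{s-stab}$ embeds in ${\rm KG}^r({\cal F}_{|_{A}})$ and the hypothesis yields $\chi({\rm KG}^r({\cal F}_{|_{X^j}}))\geq\left\lceil(|X^j|-r(k-1))/(r-1)\right\rceil$, hence $|X^j|\leq (r-1)C+r(k-1)$ whenever $X^j\notin{\cal T}$. At that point the trivial inequality $alt_I(X)\leq\sum_{j=1}^p|X^j|$ finishes your Claim; the detour through Lemma~\ref{altrs} is both unnecessary and misdirected, since that lemma gives a \emph{lower} bound on the alternation of an assembled signed set, and the parts $X^j$ are not ${\cal F}$-free here (membership outside ${\cal T}$ only bounds a chromatic number), so there are no ``${\cal F}$-free blocks'' to split them into. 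With those two repairs your argument is complete and delivers the same bound $n\leq(pr-1)C+pr(k-1)$ as the paper's.
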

\begin{proof}{
We endow $2^{[n]}$ with an arbitrary
total ordering $\leqslant$. Set $L=\left\lceil{n-pr(k-1)\over pr-1}\right\rceil$. In view of Proposition~\ref{upperconj}, we know that
$\chi({\rm KG}^{pr}(n,k)_{s-stab})\leq L=\left\lceil{n-pr(k-1)\over pr-1}\right\rceil$.
On the contrary, suppose that there is an integer $n\geq prk$ such that
$\chi({\rm KG}^{pr}(n,k)_{s-stab})<L$. Let $h$ be a proper $(L-1)$-coloring
of ${\rm KG}^{pr}(n,k)_{s-stab}$.
For a subset $A\subseteq [n]$ where $|A| \geq rk$, the hypergraph
${\rm KG}^r(|A|,k)_{s-stab}$ can be considered as
a subhypergraph of  ${\rm KG}^r(A,k)_{s-stab}$.
Hence, by assumption, we have
$$\chi({\rm KG}^r(A,k)_{s-stab}) \geq \chi({\rm KG}^r(|A|,k)_{s-stab})=\left\lceil{|A|-r(k-1)\over r-1}\right\rceil.$$
Consequently, if $|A|> (r-1)(L-1)+r(k-1)$, then $\chi({\rm KG}^r(A,k)_{s-stab})>L-1$.
In this case, there are $r$ pairwise disjoint vertices of
${\rm KG}^{r}(A,k)_{s-stab}$ such that $h$ assigns the same color to all of them.
Set $m=p\left((r-1)(L-1)+r(k-1)\right)+L-1$.
Now, we are going to introduce a map
$\lambda:\ (Z_p\cup\{0\})^n\setminus\{(0,0,\ldots,0)\}  \longrightarrow Z_p\times[m]$.
First, note that if $X\in (Z_p\cup\{0\})^n\setminus\{(0,0,\ldots,0)\}$
and $alt(X)>p\left((r-1)(L-1)+r(k-1)\right)$,
then there is an $1\leq i\leq p$ such that
 $|X^i|> (r-1)(L-1)+r(k-1)$.
So, $\chi({\rm KG}^r(X^i,k)_{s-stab})>L-1$; and therefore, there are $r$ pairwise disjoint
vertices $B_1,B_2,\ldots,B_r$ of ${\rm KG}^r(X^i,k)_{s-stab}$
such that $h(B_1)=\cdots=h(B_r)=c$. Set $\bar{h}(X)$ to be the greatest such a color $c$. Precisely,
$$\bar{h}(X)=\max\left\{c: \exists i, B_1,\ldots,B_r\in {X^i\choose k}_{s}, B_i\cap B_j=
\varnothing,  h(B_1)=\cdots=h(B_r)=c\right\}$$
Now, define $\lambda(X)$ as follows
\begin{itemize}
\item if $alt_I(X)\leq p\left((r-1)(L-1)+r(k-1)\right)$, set $\lambda(X)=(\omega^j,alt(X))$
such that $j$ is the least integer in $\displaystyle\bigcup_{k=1}^p X^k$.
\item if $alt_I(X)\geq p((r-1)(L-1)+r(k-1))+1$, define
$\lambda(X)=(\omega^i,p(r-1)(L-1)+pr(k-1)+\bar{h}(X))$ such that
there are $r$ pairwise disjoint vertices $B_1,B_2,\ldots,B_r$
for which $h(B_1)=h(B_2)=\cdots=h(B_r)=\bar{h}(X)$, $\displaystyle\bigcup_{k=1}^r B_k\subset X^i$
and $X^i$ is the biggest such a component of $X=(X^1,X^2,\ldots,X^p)$ respect to the ordering $\leqslant$.
\end{itemize}
One can see that $\lambda$ satisfies the conditions of $Z_p$-Tucker Lemma and therefore
we should have
$$D=p(r-1)(L-1)+pr(k-1)+(L-1)(p-1)\geq n.$$
But,
$$\begin{array}{rll}
  D &  =  & (pr-1)(L-1)+pr(k-1)\\
    &\leq & (pr-1)({n-pr(k-1)+pr-2\over pr-1}-1)+pr(k-1)\\
    &  =  & n-1,
\end{array}$$
which is a contradiction.
}\end{proof}
It was proved in \cite{MR2448565} that for $r=2^j$,
$\chi({\rm KG}^r(n,k)_{r-stab})=\left\lceil{n-r(k-1)\over r-1}\right\rceil$.
Next corollary is an immediate  consequence of this result and Lemma~\ref{prstable}.
\begin{cor}\label{alonst}
Assume that $a,k,n$ and $r$ are positive integers.
If we have $2^a|r$, then $\chi({\rm KG}^r(n,k)_{2^a-stab})=\left\lceil{n-r(k-1)\over r-1}\right\rceil$.
\end{cor}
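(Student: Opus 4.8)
The plan is to bootstrap from the known base case for powers of two and climb up to the general multiple of $2^a$ by repeatedly invoking Lemma~\ref{prstable}, one prime factor at a time.

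First I would fix $s = 2^a$ and record that, since $a$ is a positive integer, we have $s = 2^a \geq 2$; this is exactly the stability parameter appearing in the statement. Because $2^a \mid r$, I can write $r = 2^a q$ for a positive integer $q$, and factor $q = p_1 p_2 \cdots p_\ell$ into (not necessarily distinct) primes. The base of the induction is the result of \cite{MR2448565}, applied with exponent $a$: for every $n \geq 2^a k$, $\chi({\rm KG}^{2^a}(n,k)_{2^a-stab}) = \left\lceil{n-2^a(k-1)\over 2^a-1}\right\rceil$. In the language of Lemma~\ref{prstable} this is precisely the hypothesis for the input pair $(r_0, s) = (2^a, 2^a)$, and the required inequality $r_0 \geq s \geq 2$ holds with equality on the left.

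Next I would run the induction on the number of prime factors. Suppose that after $i$ steps we have established, for the parameter $r_i = 2^a p_1 \cdots p_i$ and every $n \geq r_i k$, the identity $\chi({\rm KG}^{r_i}(n,k)_{2^a-stab}) = \left\lceil{n-r_i(k-1)\over r_i-1}\right\rceil$. Applying Lemma~\ref{prstable} with $r = r_i$, $s = 2^a$, and the prime $p = p_{i+1}$ then yields the same identity for $r_{i+1} = p_{i+1} r_i$ and all $n \geq r_{i+1} k$. After $\ell$ applications the parameter reaches $r_\ell = 2^a q = r$, which is the desired conclusion; the case $q = 1$ (equivalently $\ell = 0$) is simply the base case with no further steps.

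The only point demanding care --- and the reason this is genuinely a corollary rather than a restatement --- is verifying that the hypothesis of Lemma~\ref{prstable} persists through the induction. At the $(i+1)$-st step the input stability is unchanged at $s = 2^a$ while the input parameter is $r_i = 2^a p_1 \cdots p_i \geq 2^a = s \geq 2$, so the condition $r_i \geq s \geq 2$ is automatically maintained; no step of the climb ever reduces the first parameter below the fixed stability $2^a$. I expect no other obstacle, since each individual lifting step is exactly what Lemma~\ref{prstable} provides.
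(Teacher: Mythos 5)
Your proof is correct and is exactly the argument the paper intends: the paper merely asserts the corollary is an ``immediate consequence'' of the Alon--Drewnowski--{\L}uczak result for $r=2^a$ and Lemma~\ref{prstable}, and your induction on the prime factorization of $r/2^a$, with the check that $r_i \geq 2^a = s \geq 2$ persists at every step, is the straightforward way to carry that out. No gaps.
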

\begin{thm}\label{2rstable}
For any positive integers $k, n$ and $r$ where $n\geq rk$, if $n\stackrel{r-1}{{\not\equiv}}k$ or $r$ is an even integer, then
$\chi({\rm KG}^r(n,k)_{2-stab})=\left\lceil{n-r(k-1)\over r-1}\right\rceil$.
\end{thm}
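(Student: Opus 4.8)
The plan is to prove the two bounds separately, dispatching the non-congruent case with the alternation bound of Theorem~\ref{dolthm} and the even case with a multiplicative induction. For the upper bound I would observe that ${\rm KG}^r(n,k)_{2-stab}$ is the induced subhypergraph of the full Kneser hypergraph ${\rm KG}^r(n,k)$ on the $2$-stable $k$-subsets, so its chromatic number is at most $\chi({\rm KG}^r(n,k))=\left\lceil{n-r(k-1)\over r-1}\right\rceil$ by the theorem of Alon, Frankl and Lov\'asz~\cite{MR857448}. It therefore remains to establish the matching lower bound in each of the two cases.

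First I would treat the case $n\not\equiv k\pmod{r-1}$. Put ${\cal F}={[n]\choose k}_2$, so that ${\rm KG}^r({\cal F})={\rm KG}^r(n,k)_{2-stab}$, and recall from Section~3 that $alt_{r,I}({\cal F})=r(k-1)+1$, whence $alt_r({\cal F})\leq r(k-1)+1$. Theorem~\ref{dolthm} then yields $\chi({\rm KG}^r(n,k)_{2-stab})\geq {n-r(k-1)-1\over r-1}$. Writing $N=n-r(k-1)$ and $N=q(r-1)+t$ with $0\leq t\leq r-2$, the right-hand side equals $q+{t-1\over r-1}$; since the chromatic number is an integer, an elementary computation shows this forces $\chi\geq\left\lceil{N\over r-1}\right\rceil$ exactly when $t\neq1$ (for $t=0$ one uses $r\geq3$, which is automatic here because the congruence hypothesis is vacuous when $r=2$). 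As $r(k-1)\equiv k-1\pmod{r-1}$, we have $N\equiv n-k+1\pmod{r-1}$, so $t=1$ is precisely the excluded residue $n\equiv k\pmod{r-1}$. Hence the hypothesis $n\not\equiv k\pmod{r-1}$ delivers the lower bound, completing this case.

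The second case, $r$ even, is exactly where the alternation bound falls one unit short, so I would instead argue by induction on the number of prime factors of $r$ through the lifting mechanism of Lemma~\ref{prstable} with $s=2$. The base case $r=2$ is Schrijver's theorem $\chi({\rm KG}^2(n,k)_{2-stab})=n-2k+2=\left\lceil{n-2(k-1)\over 2-1}\right\rceil$ for all $n\geq 2k$, furnished by the corollary following Theorem~\ref{dolthm} (which uses Lemma~\ref{gendol} with $i=2$). Factoring $r=2p_1p_2\cdots p_j$ with each $p_i$ prime, I would apply Lemma~\ref{prstable} repeatedly: starting from $r_0=2$ and passing from $r_{i-1}$ to $r_i=p_ir_{i-1}$, each step is legitimate since $r_{i-1}\geq2=s$, and it upgrades the equality $\chi({\rm KG}^{r_{i-1}}(n,k)_{2-stab})=\left\lceil{n-r_{i-1}(k-1)\over r_{i-1}-1}\right\rceil$ (valid for all $n\geq r_{i-1}k$) to the corresponding identity for $r_i$ and all $n\geq r_ik$. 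After $j$ steps one reaches $r_j=r$ and the equality holds for every $n\geq rk$, which is the assertion.

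The main obstacle is the off-by-one gap in the bad residue class $n\equiv k\pmod{r-1}$, where Theorem~\ref{dolthm} only gives $\chi\geq q$ in place of $q+1$. The role of the even hypothesis is precisely to sidestep this gap: via the prime-factor induction of Lemma~\ref{prstable} the problem reduces to the $r=2$ Schrijver case, where the sharper $i=2$ alternation estimate of Lemma~\ref{gendol} already supplies the missing unit. I expect the only delicate points to be checking that the hypotheses of Lemma~\ref{prstable} persist along the induction (which they do, since $s=2$ remains below every intermediate $r_i$) and the routine ceiling arithmetic in the first case.
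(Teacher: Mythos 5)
Your proof is correct and follows essentially the same route as the paper: the upper bound from the inclusion into ${\rm KG}^r(n,k)$, the lower bound for $n\not\equiv k\pmod{r-1}$ via $alt_{r,I}\bigl({[n]\choose k}_2\bigr)\leq r(k-1)+1$ and Theorem~\ref{dolthm} plus the ceiling computation, and the even case by the prime-factor induction of Lemma~\ref{prstable}. The only cosmetic difference is that you unfold the content of Corollary~\ref{alonst} (taking Schrijver's theorem as the $r=2$ base) where the paper simply cites that corollary.
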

\begin{proof}{
In view of Corollary~\ref{alonst}, if $r$ is an even integer, then there is noting to prove.
One can see that ${\rm KG}^r(n,k)_{2-stab}$ is a subgraph of ${\rm KG}^r(n,k)$.
This implies that
$\chi({\rm KG}^r(n,k)_{2-stab})\leq \left\lceil{n-r(k-1)\over r-1}\right\rceil$.
Hence, it is sufficient to show that  if $n\stackrel{r-1}{{\not\equiv}}k$,
then $\chi({\rm KG}^r(n,k)_{2-stab})\geq \left\lceil{n-r(k-1)\over r-1}\right\rceil$.

Assume that $X\in (Z_r\cup\{0\})^n\setminus\{(0,0,\ldots,0)\}$ and $alt_I(X)\geq r(k-1)+2$.
One can see that there exists at least an $X^i$ (for some $1\leq i\leq r$)
containing some vertex of ${\rm KG}^r(n,k)_{2-stab}$.
Therefore, $alt_{r,I}({[n]\choose k}_{2})\leq  r(k-1)+1$.
By Theorem~\ref{dolthm}, we have
$$\chi({\rm KG}^r(n,k)_{2-stab})\geq \left\lceil{n-alt_{r,I}({[n]\choose k}_{2})\over r-1}\right\rceil\geq
\left\lceil{n-r(k-1)-1\over r-1}\right\rceil$$
One can check that $\left\lceil{n-r(k-1)-1\over r-1}\right\rceil=\left\lceil{n-r(k-1)\over r-1}\right\rceil$
provided that $n\stackrel{r-1}{{\not\equiv}}k$.
}\end{proof}
In view of $Z_p$-Tucker Lemma, Meunier~\cite{MR2793613} proved that,
for any positive integer $r$ and any $n\geq kp$, the chromatic number of ${\rm KG}^r(n,k)^\sim_{2-stab}$
is the same as the chromatic number of ${\rm KG}^r(n,k)$, namely that is equal to
$\left\lceil{n-r(k-1)\over r-1}\right\rceil$.
\begin{alphthm}{\rm \cite{MR2793613}}
For any $r\geq 2$, we have $\chi({\rm KG}^r(n,k)^\sim_{2-stab})=\left\lceil{n-r(k-1)\over r-1}\right\rceil$.
\end{alphthm}
\begin{proof}{
We proceed analogously to the proof of Theorem \ref{2rstable}.
Note that if  $X\in (Z_r\cup\{0\})^n\setminus\{(0,0,\ldots,0)\}$ and $alt_I(X)\geq r(k-1)+1$, then
there exists at least an $X^i$ (for some $1\leq i\leq r$) containing some
vertex of ${\rm KG}^r(n,k)^\sim_{2-stab}$.
Therefore, $alt_{r,I}({[n]\choose k}^{\sim}_{2})\leq  r(k-1)$.
By Theorem \ref{dolthm}, we have
$$\chi({\rm KG}^r(n,k)^\sim_{2-stab})\geq
\left\lceil{n-alt_{r,I}({[n]\choose k}^{\sim}_{2})\over r-1}\right\rceil\geq
\left\lceil{n-r(k-1)\over r-1}\right\rceil.$$
This completes the proof.
}\end{proof}
\section{Colorful Graphs}
We say that a graph is \emph{completely multicolored} in a coloring (\emph{colorful}) if all its vertices
receive different colors.
\begin{alphthm}{\rm \cite{MR2351519}}\label{local}
Let ${\cal F}$ be a hypergraph and $r=cd_2({\cal F})$.
Then any proper coloring of ${\rm KG}({\cal F})$ with colors $\{1,2,\ldots,k\}$
{\rm (}$k$ arbitrary{\rm )} must contain a completely multicolored complete bipartite graph
$K_{\lceil{r\over 2}\rceil,\lfloor{r\over 2}\rfloor}$ such that the $r$ different
colors occur alternating on the two sides of the bipartite graph with respect to their natural
order.
\end{alphthm}

We should mention that there are several versions of Theorem
\ref{local} in terms of some other parameters in
graphs, see  \cite{MR2087312, MR650012, MR2073516, MR2279672}.
The aforementioned theorem presents a lower bound for
{\it local chromatic number} of a graph which is the minimum number of
colors that must appear within distance $1$ of a vertex,
for more about local chromatic number, see \cite{MR837951,MR2279672}.
Next theorem provides a generalization of Theorem~\ref{local} in terms of alternation number of graphs.
\begin{thm}\label{Colorful}
Let ${\cal F}\subseteq 2^{[n]}$ be a hypergraph and $r={n-alt_2({\cal F})}$.
Then any proper coloring of ${\rm KG}({\cal F})$ with colors $\{1,2,\ldots,k\}$
{\rm (}$k$ arbitrary{\rm )} must contain a completely multicolored complete bipartite graph
$K_{\lceil{r\over 2}\rceil,\lfloor{r\over 2}\rfloor}$ such that the $r$ different
colors occur alternating on the two sides of the bipartite graph with respect to their natural
order.
\end{thm}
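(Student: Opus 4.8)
The plan is to mirror the structure of the proof of Theorem~\ref{2rstable} (and of the almost-stable theorem), but instead of invoking the bare $Z_p$-Tucker Lemma to extract a single monochromatic edge, I would invoke the \emph{Tucker-Ky Fan's lemma} in order to obtain the alternating chain of signed sets that encodes the bipartite structure. Since $r={n-alt_2({\cal F})}$, Theorem~\ref{dolthm} already tells us that $\chi({\rm KG}({\cal F}))\geq \frac{n-alt_2({\cal F})}{1}=r$, so a proper coloring with $\{1,\ldots,k\}$ genuinely uses at least $r$ colors on some configuration; the point is to promote this inequality to the refined colorful conclusion.

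First I would fix a proper coloring $h$ of ${\rm KG}({\cal F})$ and, exactly as in the proof of Lemma~\ref{gendol} specialized to $p=2$, build a $Z_2$-equivariant labeling $\lambda:\{-1,0,+1\}^n\setminus\{(0,\ldots,0)\}\longrightarrow\{\pm1,\ldots,\pm m\}$. On signed sets $X$ with small alternation ($alt_I(X)\leq alt_2({\cal F})$) set $\lambda(X)=\pm\, alt_I(X)$, with sign given by the first non-zero term of $X$; on signed sets with $alt_I(X)\geq alt_2({\cal F})+1$, which by definition of the alternation number must contain a member of ${\cal F}$ in some $X^j$, set $\lambda(X)=\pm(\,alt_2({\cal F})+\bar h(X)\,)$, again with sign recording which side $j$ achieves the maximum color. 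The antipodality $\lambda(-X)=-\lambda(X)$ is immediate, and $m$ is chosen as $alt_2({\cal F})$ plus the number of colors used on the high-alternation region.

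Next I would verify hypothesis (2) of Tucker-Ky Fan: there is no pair $X\preceq Y$ with $\lambda(X)=-\lambda(Y)$. This is precisely the argument already run in Lemma~\ref{gendol}: for labels in the low range, $X\preceq Y$ with equal absolute value forces equal first non-zero terms, hence equal signs; for labels in the high range, a clash $\lambda(X)=-\lambda(Y)$ would produce two disjoint members of ${\cal F}$ of the same color, contradicting that $h$ is proper. Having ruled out complementary edges, Tucker-Ky Fan's lemma yields signed sets $X_1\preceq X_2\preceq\cdots\preceq X_n$ whose labels are $\{+a_1,-a_2,\ldots,(-1)^{n-1}a_n\}$ with $1\leq a_1<\cdots<a_n\leq m$. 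The decisive observation is that at most $alt_2({\cal F})$ of these labels can fall in the low range (since the low labels are bounded in absolute value by $alt_2({\cal F})$), so at least $n-alt_2({\cal F})=r$ of the $a_i$ exceed $alt_2({\cal F})$ and therefore arise from genuine colored members of ${\cal F}$. Reading off the $r$ sets $F_i\in{\cal F}$ witnessing these high labels, with colors $\bar h=a_i-alt_2({\cal F})$ strictly increasing and signs strictly alternating, and using the nesting $X_i\preceq X_{i+1}$ to see that two sets on \emph{opposite} sides (opposite signs, hence living in disjoint coordinate-blocks $X^{+1}$ versus $X^{-1}$ of a common $X_j$) are disjoint and thus adjacent in ${\rm KG}({\cal F})$, produces the completely multicolored $K_{\lceil r/2\rceil,\lfloor r/2\rfloor}$ with the $r$ colors alternating across the two sides in their natural order.

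The main obstacle, and the step I would spend the most care on, is the final combinatorial bookkeeping: translating the nested chain $X_1\preceq\cdots\preceq X_n$ together with the alternating-sign label pattern into an honest complete bipartite subgraph. One must argue that the sets $F_i$ associated to $+$ labels and those associated to $-$ labels are mutually disjoint across the two classes (so that every cross pair is an edge), while pairs on the same side need not be compared; this uses that when $\lambda(X_i)$ and $\lambda(X_j)$ have opposite signs and $X_i\preceq X_j$, the witnessing set for the smaller index lies in the $+1$-part and the other in the $-1$-part of the larger signed set, forcing disjointness. Equating $r=cd_2({\cal F})$ with $n-alt_2({\cal F})$ would recover Theorem~\ref{local} as the special case, since $alt_2({\cal F})\geq M_2({\cal F})$ makes the present hypothesis at least as strong; confirming that this really generalizes Theorem~\ref{local} rather than weakening it is a point I would state explicitly at the end.
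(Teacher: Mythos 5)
Your proposal follows essentially the same route as the paper's proof: the identical two-range $Z_2$-equivariant labeling $\lambda$ (low labels $\pm\,alt_I(X)$ signed by the first non-zero term, high labels $\pm(\bar h(X)+alt_2({\cal F}))$ signed by the side carrying the maximal-color member of ${\cal F}$), Tucker--Ky Fan's lemma to produce the nested chain $X_1\preceq\cdots\preceq X_n$ with alternating labels, and the count showing at least $r=n-alt_2({\cal F})$ labels land in the high range, whose witnesses sit inside the disjoint sets $X_n^{+}$ and $X_n^{-}$ and hence form the colorful $K_{\lceil r/2\rceil,\lfloor r/2\rfloor}$. The only details you omit are minor --- the paper's parity adjustment (passing to $2^{[n+1]}$ when $alt_2({\cal F})$ is odd) and the monotonicity argument giving $\lambda(X_i)=(-1)^{i-1}c_i$ --- and your closing inequality is reversed: the paper shows $alt_2({\cal F})\le M_2({\cal F})$, which is precisely what makes $r=n-alt_2({\cal F})\ge cd_2({\cal F})$ and the present theorem a strengthening of Theorem~\ref{local}.
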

\begin{proof}{
Without loss of generality, we can suppose that $alt_{2}({\cal F})=alt_{2,I}({\cal F})$,
where $I$ the identity permutation on $[n]$.
First, assume that $M=alt_{2,I}({\cal F})$ is an even integer.
Consider an arbitrary total ordering $\leqslant$ on the
power set of $[n]$ that refines the partial ordering according to size.
In other words, if $|A| < |B|$, then $A\leqslant B$, and sets of the same
size can be ordered arbitrary, e.g., lexicographically.
Assume that $h$ is a proper coloring of $G={\rm KG}({\cal F})$ with $k$ colors $\{1,2,\ldots,k\}$.
Now, we construct a map
$\lambda:\{-1,0,+1\}^n\setminus \{(0,\ldots,0)\} \longrightarrow \{\pm 1, \pm 2,\ldots ,\pm m\}$ where $m=M+k$.
For $X\in\{-,0,+\}^n\setminus\{(0,0,\ldots,0)\}$, set $\lambda(X)$ as follows
\begin{itemize}
\item If $alt_I(X)\leq alt_{2,I}({\cal F})$, we define $\lambda(X)=\pm alt_I(X)$,
      where the sign is determined by the sign of the first element
      (with respect to the permutation $I$) of the longest
      alternating subsequence of $X$ (which is actually the first non-zero term of $X$).

\item If $alt_I(X)\geq alt_{2,I}({\cal F})+1$, in view of definition of $alt_I({\cal F})$,
      either $X^+$ or $X^-$ contains a member of ${\cal F}$.
      Define $c=\max\{h(F):\ F\in {\cal F}_{|X}\}$. Assume that
      $F$ is a member of ${\cal F}_{|X}$ such that $h(F)=c$. Set $\lambda(X)=\pm(h(F)+M)$, where the sign is
      positive if $F\subseteq X^+$ and negative if $F\subseteq X^-$.
\end{itemize}
It is straightforward to see that $\lambda:\{-,0,+\}^n\setminus\{(0,0,\ldots,0)\}
\longrightarrow \{\pm1,\pm2,\ldots,\pm m\}$ satisfies the conditions of Tucker-Ky Fan's Lemma.
Therefore, by Tucker-Ky Fan's Lemma, there are $n$ signed sets
$X_1\preceq X_2\preceq \cdots \preceq X_n$ such that
$\{\lambda(X_1),\ldots,\lambda(X_n)\}=\{c_1,-c_2,c_3,\ldots, (-1)^{n-1}c_n\}$
where  $1\leq c_1<c_2<\cdots<c_n\leq m$.

For any $1\leq i\leq n$, set $|X_i|=|X_i^+\cup X_i^-|$. Since $1\leq|X_1|<|X_2|<\cdots<|X_n|\leq n$, we have $|X_i|=i$.
Note that $|\lambda|$ is a monotone function; and therefore, $\lambda(X_i)=(-1)^{i-1}c_i$.
This observation concludes that $|X^+_i|=\lceil{i\over 2}\rceil$ and $|X^-_i|=\lfloor{i\over 2}\rfloor$.
In particular, $|X^+_M|={M\over 2}$ and $|X^-_M|={M\over 2}$.

Note that for $i\geq M+1$, we have $|\lambda(X_i)|=\bar{h}(X_i)+M$ and this implies that
\begin{itemize}
\item if $i$ is even, then $\bar{h}(X^-_i)=c_i$
\item if $i$ is odd, then $\bar{h}(X^+_i)=c_i$
\end{itemize}
Now, for any $i=M+2l\in\{M+1,M+2,\ldots,n\}$, there is an $F_l\in {\cal F}$
such that $F_l\subseteq X^-_i\subseteq X^-_n$ and $h(F_l)=c_{M+2l}$.
Also, for any $i=M+2l-1\in\{M+1,M+2,\ldots,n\}$, there is a $G_l\in {\cal F}$
such that $F_l\subseteq X^+_i\subseteq X^+_n$ and $h(G_l)=c_{M+2l-1}$.
Since $X^+_n\cap X^-_n=\varnothing$, the induced subgraph on vertices
$$\left\{F_1,F_2,\ldots,F_{\lfloor{n-alt_2({\cal F})\over 2}\rfloor}\right\}\bigcup\left\{G_1,G_2,\ldots,
G_{\lceil{n-alt_2({\cal F})\over 2}\rceil}\right\}$$
is a complete bipartite graph which is the desired subgraph.

Now, assume that $M$ is an odd integer. One can consider
${\cal F}$ as a subset of $2^{[n+1]}$. Note that  $alt_2({\cal F})=M+1$
is even integer and still $n-M=(n+1)-(M+1)$.
Therefore, a similar proof works when $M$ is an odd integer.
}\end{proof}
Suppose that $p$ and $q$ are positive integers where $p\geq 2q$ and $G$ is a graph.
A \emph{$(p,q)$-coloring} of $G$ is a mapping $h:{\cal V}(G)\longrightarrow \{0,1,\ldots,p-1\}$ such that
for any edge $xy\in E(G)$, we have $q\leq|h(x)-h(y)|\leq p-q$. The \emph{circular chromatic number} of $G$
is defined as follows
$$\chi_c(G)=\inf\left\{{p\over q}: G {\rm\ admits\ a}\ (p,q)-{\rm coloring}\right\}$$
It is well-known \cite{MR1815614} that $\chi(G)-1<\chi_c(G)\leq \chi(G)$.
For more on circular chromatic number, see \cite{MR1815614,MR2249284}.

The problem whether a graph has the same chromatic number and circular
chromatic number has received attention,
see~\cite{MR2601263, MR2197228, MR2279672, MR1815614,MR2249284}.
For a $t$-coloring $h$ of $G$, a cycle $C = (v_0,v_1,\ldots,v_{n-1},v_0)$
is called \emph{tight} if $h(v_{i+1})\stackrel{t}{{\equiv}}h(v_i)+1$
for $i =0,1,\ldots,n-1$, where the indices of the vertices are
modulo $n$. It is known \cite{MR1815614} that for a positive integer $t$,
$\chi_c(G)=t$ if and only if $G$ is $t$-colorable and every $t$-coloring of $G$ has a tight cycle.

Assume that ${\cal F}\subseteq 2^{[n]}$ and $\chi({\rm KG}^2({\cal F}))=n-alt_2({\cal F})$.
Previous theorem implies that, for any $\chi({\rm KG}^2({\cal F}))$-coloring
of ${\rm KG}^2({\cal F})$, there is a colorful complete bipartite graph
$K_{\lceil{\chi({\rm KG}^2({\cal F}))\over 2}\rceil,\lfloor{\chi({\rm KG}^2({\cal F}))\over 2}\rfloor}$.
This result implies that $\chi_c({\rm KG}^2({\cal F}))=\chi({\rm KG}^2({\cal F}))$
provided that $\chi({\rm KG}^2({\cal F}))$
is an even integer.

\begin{cor}\label{circular}
Assume that ${\cal F}\subseteq 2^{[n]}$ and $\chi({\rm KG}^2({\cal F}))=n-alt_2({\cal F})$. If $\chi({\rm KG}^2({\cal F}))$
is an even integer, then $\chi_c({\rm KG}^2({\cal F}))=\chi({\rm KG}^2({\cal F}))$.
\end{cor}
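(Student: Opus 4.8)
The plan is to invoke the characterization of circular chromatic number via tight cycles, which is stated earlier in the excerpt: for a positive integer $t$, we have $\chi_c(G)=t$ if and only if $G$ is $t$-colorable and every $t$-coloring of $G$ admits a tight cycle. Since $\chi_c(G)\leq \chi(G)$ always holds, and since under our hypothesis $\chi({\rm KG}^2({\cal F}))$ is a given even integer $t$, it suffices to show that every proper $t$-coloring of ${\rm KG}^2({\cal F})$ contains a tight cycle. This will force $\chi_c=\chi=t$.

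First I would fix a proper $t$-coloring $h$ of $G={\rm KG}^2({\cal F})$, where $t=\chi({\rm KG}^2({\cal F}))=n-alt_2({\cal F})$ is even. I then apply Theorem~\ref{Colorful} with $r=n-alt_2({\cal F})=t$. Because $t$ is even, $\lceil r/2\rceil=\lfloor r/2\rfloor=t/2$, so the theorem produces a completely multicolored complete bipartite graph $K_{t/2,\,t/2}$ inside $G$ in which the $t$ distinct colors occur alternating on the two sides with respect to their natural order. Concretely, writing the color on one side in increasing order as an odd-indexed sequence and on the other side as an even-indexed sequence, the alternation property means the colors, read in their natural cyclic order $1,2,\dots,t,1$, zigzag between the two parts of the bipartite graph.

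The key step is to extract a tight cycle from this alternating colorful $K_{t/2,t/2}$. The plan is to list the vertices so that consecutive colors $i$ and $i+1$ always lie on opposite sides of the bipartition; since every vertex of one side is adjacent to every vertex of the other, the edge between the vertex colored $i$ and the vertex colored $i+1$ is present in $G$ for each $i=1,\dots,t-1$, and likewise the edge between the vertex colored $t$ and the vertex colored $1$ is present precisely because $t$ is even (so colors $t$ and $1$ sit on opposite sides). Traversing these $t$ vertices in the order of their colors $1,2,\dots,t$ and closing up produces a cycle $C$ on which the colors increase by $1$ modulo $t$ at each step; this is by definition a tight cycle. Parity of $t$ is exactly what guarantees the wrap-around edge from color $t$ back to color $1$ exists in the bipartite graph, so the cycle closes.

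The main obstacle, and the only place needing care, is verifying that this is a genuine cycle in $G$ (all the required edges are present and the vertices are distinct) and that the closing-up step uses the edge between the two extreme colors correctly; this is where evenness of $t$ is indispensable, since for odd $t$ colors $t$ and $1$ would fall on the same side and the cycle would fail to close. Once the tight cycle is produced for an arbitrary proper $t$-coloring, the tight-cycle characterization of $\chi_c$ gives $\chi_c({\rm KG}^2({\cal F}))=t=\chi({\rm KG}^2({\cal F}))$, completing the proof.
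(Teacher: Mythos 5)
Your proposal is correct and follows exactly the route the paper intends: apply Theorem~\ref{Colorful} with $r=t=n-alt_2({\cal F})$ to get a colorful $K_{t/2,t/2}$ with the colors alternating between the sides, extract a tight cycle by traversing the vertices in color order (evenness of $t$ closing the cycle), and invoke the tight-cycle characterization of $\chi_c$. The paper leaves these details implicit in the sentence preceding the corollary, but your filled-in argument is the intended one.
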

It has been conjectured in \cite{MR1475894} that any Kneser graph has the
same chromatic number and circular chromatic number.
This conjecture has been studied in several papers, see
\cite{MR2601263,MR2971704, MR2763055, MR1983360, MR1475894, MR2197228, MR2279672}.
Finally, Chen \cite{MR2763055} completely proved this conjecture by using
Fan's Lemma in an innovative way. Next, a shorter proof was presented in~\cite{MR2971704}.
For ${\cal F}={\cal V}({\rm KG}^2(\pi;\vec{s};k))$, in view of the proof of
Theorem \ref{main}, we have $alt_2({\cal F})=M_{2,{\pi}}-1$
and therefore,
$$\chi({\rm KG}^2(\pi;\vec{s};k))= \max\{1,n-alt_2({\cal F})\}.$$
Next corollary is a consequence of Corollary~\ref{circular}.
\begin{cor}
Let $k, n,$ and $m$ be positive integers where $k\geq 1$. Also,
assume that $\pi=(P_1,P_2,...,P_m)$ is a
partition of $[n]$ and $\vec{s}=(s_1,s_2,...,s_m)$ is a positive integer vector where $k\leq \displaystyle\sum_{i=1}^ms_i$.
If $\chi({\rm KG}^2(\pi;\vec{s};k))$ is an even integer, then
$\chi_c({\rm KG}^2(\pi;\vec{s};k))=\chi({\rm KG}^2(\pi;\vec{s};k))$.
\end{cor}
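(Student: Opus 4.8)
The plan is to recognize the final corollary as a direct specialization of Corollary~\ref{circular} to the hypergraph ${\cal F}={\cal V}({\rm KG}^2(\pi;\vec{s};k))$, so that ${\rm KG}^2({\cal F})={\rm KG}^2(\pi;\vec{s};k)$. The only thing that must be checked is the hypothesis of Corollary~\ref{circular}, namely the exact equality $\chi({\rm KG}^2({\cal F}))=n-alt_2({\cal F})$; once this is in place, everything follows by quotation.

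First I would recall, as noted just before the statement, that for this choice of ${\cal F}$ the proof of Theorem~\ref{main} yields $alt_2({\cal F})=M_{2,\pi}-1$; the upper bound on the alternation number is exactly the content of Lemma~\ref{alt}, while the matching lower bound on the chromatic number comes from Theorem~\ref{dolthm}. Combining this with the conclusion of Theorem~\ref{main}, $\chi({\rm KG}^2(\pi;\vec{s};k))=\max\{1,n-M_{2,\pi}+1\}$, I obtain
$$\chi({\rm KG}^2(\pi;\vec{s};k))=\max\{1,\,n-alt_2({\cal F})\}.$$

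Next I would invoke the hypothesis that $\chi({\rm KG}^2(\pi;\vec{s};k))$ is an even integer. Then it is at least $2$, so the maximum above cannot be attained by the term $1$; hence $\max\{1,\,n-alt_2({\cal F})\}=n-alt_2({\cal F})$, and in particular $\chi({\rm KG}^2({\cal F}))=n-alt_2({\cal F})$. This is precisely the standing assumption of Corollary~\ref{circular}.

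Finally, applying Corollary~\ref{circular} to ${\cal F}$, whose two hypotheses (the displayed equality and the evenness of the chromatic number) have now been verified, gives $\chi_c({\rm KG}^2({\cal F}))=\chi({\rm KG}^2({\cal F}))$, that is $\chi_c({\rm KG}^2(\pi;\vec{s};k))=\chi({\rm KG}^2(\pi;\vec{s};k))$, as desired. I expect no genuine obstacle here: the substantive work is already carried by Corollary~\ref{circular} (hence by Theorem~\ref{Colorful} and its colorful complete-bipartite-subgraph argument), and the sole point requiring care is confirming that evenness forces the maximum in Theorem~\ref{main} onto the non-trivial branch $n-alt_2({\cal F})$, so that the hypothesis of Corollary~\ref{circular} is met verbatim.
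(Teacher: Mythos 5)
Your proposal is correct and follows the paper's own route exactly: the paper likewise notes that for ${\cal F}={\cal V}({\rm KG}^2(\pi;\vec{s};k))$ one has $alt_2({\cal F})=M_{2,\pi}-1$, hence $\chi({\rm KG}^2(\pi;\vec{s};k))=\max\{1,n-alt_2({\cal F})\}$ by Theorem~\ref{main}, and then quotes Corollary~\ref{circular}. Your added remark that evenness forces the maximum onto the branch $n-alt_2({\cal F})$ is a small point the paper leaves implicit, but it is exactly the right justification.
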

{\bf Acknowledgement:} The authors wish to express their gratitude to
Professor Carsten Thomassen for his invaluable support.
\def\cprime{$'$} \def\cprime{$'$}

\end{document}